\newtheorem{thm}{Theorem}[section]
\newtheorem{cor}[thm]{Corollary}
\newtheorem{lem}[thm]{Lemma}
\newtheorem{defn}{Definition}
\newcommand{\ds}{\displaystyle}
\newcommand{\la}{\lambda}
\newcommand{\C}{\mathcal{C}}
\newcommand{\F}{\mathcal{F}}
\newcommand{\LL}{\mathcal{L}}
\newcommand{\SSS}{\mathcal{S}}
\title{n-Dimensional Optical Orthogonal Codes, Bounds and Optimal Constructions }
\author{T.~L.~Alderson  \thanks {The author acknowledges the support of the NSERC of Canada Discovery Grant program.}}
\affil[1]{University of New Brunswick, Canada}
\begin{document}
\maketitle



\begin{abstract}
We generalized to higher dimensions the notions of optical orthogonal codes. We establish uper bounds on the capacity of general $ n $-dimensional OOCs, and on specific types of ideal codes (codes with zero off-peak autocorrelation).   The bounds are based on the Johnson bound, and subsume many of the bounds that are typically applied  to codes of dimension three or less.  We also present two new constructions of ideal codes; one furnishes an infinite family of optimal codes for each dimension $ n\ge 2 $, and another which provides an asymptotically optimal family for each dimension $ n\ge 2 $. The constructions presented are based on certain point-sets in finite projective spaces of dimension $k$ over $GF(q)$ denoted $PG(k,q)$.  
\end{abstract}

%



\section{Introduction}

A (1-dimensional) $(n,w,\lambda_a,\lambda_c)$ optical orthogonal code (OOC) is a family of binary sequences
(codewords) of length $n$, and constant Hamming weight $w$ satisfying the following two conditions:
\begin{itemize}
\item (off-peak auto-correlation property) for any codeword
$c=(c_0,c_1,\ldots,c_{n-1})$ and for any integer $1\leq t \leq n-1$, we have $\ds
\sum_{i=1}^{n-1}c_ic_{i+t}\leq \lambda_a$,
\item (cross-correlation property) for any two distinct codewords
$c,c'$ and for any integer $0\leq t \leq n-1$, we have $\ds \sum_{i=0}^{n-1}c_ic'_{i+t}\leq
\lambda_c$,
\end{itemize}
where each subscript is reduced modulo $n$.

An $(N,w,\lambda_a,\lambda_c)$ OOC with $\lambda_a=\lambda_c$ is denoted an $(N,w,\lambda)$ OOC.
The number of codewords is the \emph{size} of the code.  For fixed values of $N$, $w$, $\lambda_a$
and $\lambda_c$, the largest size of an $(N,w,\lambda_a,\lambda_c)$-OOC is denoted
$\Phi(N,w,\la_a,\la_c)$. An $(N,w,\la_a,\la_c)$-OOC of size $\Phi(N,w,\la_a,\la_c)$ is said to be
\textit{optimal}.

\begin{def} \label{asym} A  family of $ (N,w,\lambda_a,\lambda_c)  $ OOCs   is called asymptotically optimal if 
\begin{equation}
\lim_{N\to \infty} \frac{|C|}{\Phi(C)}=1.
\end{equation}
\end{def}

Since the work of Salehi \textit{et. al.} \cite{Salehi1989} \cite{Salehi1989a},   OOCs have been employed within optical code division multiple access (OCDMA) networks.  OCDMA  networks are widely employed due to their strong performance with multiple users. They are ideally suited for bursty, asynchronous, concurrent traffic.  In applications, optimal OOCs facilitate the largest possible number of asynchronous users to transmit information efficiently and reliably. %
In order to maintain low correlation values the code length must increase quite rapidly with the number of users, reducing bandwidth utilization.  

The $ 1 $-D OOCs spread the input data bits only in the time domain. Technologies such as wavelength-division-multiplexing (WDM) and dense-WDM enable the spreading of codewords  in both space and time \cite{Park1992}, or in wave-length and time \cite{Mendez1995a}.  Hence, codewords may be considered as $\Lambda \times T$ $(0,1)$-matrices. These codes are referred to in the literature as multiwavelength, multiple-wavelength, wavelength-time hopping, and 2-dimensional OOCs (2-D OOCs).

This addition of another dimension allows codes with off-peak autocorrelation zero and  thereby improving the OCDMA performance in comparison with 1-D OCDMA. For optimal constructions of 2-D OOC's see \cite{Alderson20111187,1315909,1523307}, and for asymptotically optimal constructions see \cite{OmraniEK04,OmraniEK06,MR1641021,486611,649764}.  
Later, a third dimension was added  which gave an increase the code size and the performance of the code \cite{doi:10.1117/12.238940,Alderson2017}. In 3-D OCDMA the optical pulses are spread in three domains space, wave-length, and time, with codes referred to as \textit{space/wavelength/time spreading} codes, or \textit{3-D OOC}. In  \cite{Alvarado2015}, coherent   fibre-optic   communication   systems are discussed, whereby   both quadratures and both polarizations of the electromagnetic field are  used, resulting in a  four-dimensional signal  space.

In the present work we carry these developments to the next natural stage, introducing constructions and bounds on $ n $-dimensional OOCs, for all $ n\ge 1 $. In section \ref{sec:nD OOCs and bounds} we introduce $ n $-dimensional OOCs.  We develop some upper bounds on these codes based on the Johnson Bound. We also develop bounds on higher dimensional ideal codes ($ \la_a=0 $). In Section \ref{sec: New Constructions} we present two new constructions of ideal codes; one infinite family of optimal codes, and another which is asymptotically optimal.     

\subsection{$ n $-D OOCs and Bounds} \label{sec:nD OOCs and bounds}


Denote by $( \Lambda_1\times \Lambda_2  \cdots \times \Lambda_{n-1} \times T,w, \lambda_a, \lambda_c)$  an $ n $ dimensional Optical Orthogonal Code  ($ n$-D OOC)  with constant weight $w$, $i $'th spreading length $ \Lambda_i $, $ 1\le i \le {n-1} $, and time-spreading length $T$. Each codeword may be considered as  an $ n $-dimensional $\Lambda_1\times \Lambda_2  \cdots \times \Lambda_{n-1} \times T$ binary array.  The off-peak autocorrelation, and cross correlation of
 an $(\Lambda_1\times \Lambda_2  \cdots \times \Lambda_{n-1} \times T,w, \lambda_a, \lambda_c)$ $ n $-D OOC have the following properties.
 
 \begin{itemize}
 	\item (off-peak auto-correlation property) for any codeword
 	$A=(a_{i_1,i_2,\ldots,i_n})$ and for any integer $1\leq t \leq T-1$, we have\\ $\ds
 	\sum_{i_1=0}^{\Lambda_1-1}\sum_{i_2=0}^{\Lambda_2-1} \cdots \sum_{i_{n-1}=0}^{\Lambda_{n-1}-1}\sum_{i_n=1}^{T-1}a_{i_1,i_2,\ldots,i_n}a_{i_1,i_2,\ldots,i_n+t}\leq \lambda_a$,
 	\item (cross-correlation property) for any two distinct codewords
 	$A=(a_{i_1,i_2,\ldots,i_n})$, $B=(b_{i_1,i_2,\ldots,i_n})$ and for any integer $0 \leq t \leq T-1$, we have\\ $\ds
 	 	\sum_{i_1=0}^{\Lambda_1-1}\sum_{i_2=0}^{\Lambda_2-1} \cdots \sum_{i_{n-1}=0}^{\Lambda_{n-1}-1}\sum_{i_n=1}^{T-1}a_{i_1,i_2,\ldots,i_n}b_{i_1,i_2,\ldots,i_n+t}\leq \lambda_c$,
 \end{itemize}
 where each subscript is reduced modulo $T$. In the case that $ \la_a=\la_c $, $ C $ is denoted an $( \Lambda_1\times \Lambda_2  \cdots \times \Lambda_{n-1} \times T,w, \lambda)$ OOC.

We note that taking all but $ t-1 $ of the $ \Lambda_i $'s to be $ 1 $ results in a $ t $-dimensional OOC. As with other OOCs we shall take minimal correlation values  to be most desirable.  Codes satisfying $ \la_a=0 $ will be said to be \textit{ideal}. 

 As it is of interest to construct codes with as large cardinality as possible, we now discuss some upper bounds on the size of codes.
 We shall require the following notation. By an $ (N,w,\lambda)_{m+1} $-code, we denote a code of  length $ N $, with constant weight $ w $,  and maximum Hamming correlation (the number of non-zero agreements between the two codewords) of $ \lambda $ over an alphabet (containing zero) of size $ m+1 $. For binary codes ($ m=1 $) the subscript $ 2 $ is typically dropped. Let $ A(N,w,\lambda)_{m+1}$ denote the maximum size of  an $ (N,w,\lambda)_{m+1} $-code. In \cite{Alderson2017}, the following bound is established.

 \begin{thm}[\cite{Alderson2017},Johnson Bound Non-binary] \label{thm:JB}
 	\[   A(N,w,\lambda)_{m+1} \le \left\lfloor
 	\frac{m N}{w} \left\lfloor \frac{m(N-1)}{w-1}\left\lfloor 
 	\cdots \left\lfloor %
 	\frac{ m(N-\lambda)}{w-\la}\right\rfloor \right\rfloor \cdots \right\rfloor\right.
 	.\]
 If $ w^2>mN\lambda $ then
 \[
 A(N,w,\lambda)_{m+1}\le \text{ min } \left\{mN,\left\lfloor \frac{mN(w-\lambda)}{w^2-mN\lambda} \right \rfloor \right\}.
 \]
 	
 \end{thm}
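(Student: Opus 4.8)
The plan is to prove both inequalities by the classical Johnson-bound double-counting argument, adapted to a non-binary alphabet; throughout we assume $w>\lambda$ (otherwise the correlation constraint is vacuous and the statement is interpreted accordingly). Fix a code $C$ of size $\abs{C}=A(N,w,\lambda)_{m+1}$. For each coordinate $i\in\{1,\dots,N\}$ and each nonzero symbol $a\in\{1,\dots,m\}$, let $n_{i,a}$ denote the number of codewords of $C$ whose $i$-th entry equals $a$. Since every codeword has exactly $w$ nonzero entries, counting the pairs (codeword $c$, coordinate $i$ with $c_i\neq 0$) in two ways gives $\sum_{i=1}^{N}\sum_{a\ne 0}n_{i,a}=w\abs{C}$, a sum with $mN$ terms.

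For the first (nested-floor) inequality I would argue by iterated shortening. Fix $(i,a)$ with $a\ne 0$ and consider the subcode of those codewords of $C$ that carry symbol $a$ in coordinate $i$, with coordinate $i$ then deleted. Any two such codewords already share a nonzero symbol at coordinate $i$, so after deletion they have Hamming correlation at most $\lambda-1$; each also has weight $w-1$ and length $N-1$, so $n_{i,a}\le A(N-1,w-1,\lambda-1)_{m+1}$. Summing over all $mN$ pairs $(i,a)$ yields $w\abs{C}\le mN\cdot A(N-1,w-1,\lambda-1)_{m+1}$, and since $\abs{C}$ is an integer, $A(N,w,\lambda)_{m+1}\le\bigl\lfloor\tfrac{mN}{w}\,A(N-1,w-1,\lambda-1)_{m+1}\bigr\rfloor$. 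Iterating this $\lambda$ times reduces to the base case $\lambda=0$: a code of Hamming correlation $0$ has every $n_{i,a}\le1$ (two codewords sharing a nonzero symbol in a common coordinate would have positive correlation), so $A(N',w',0)_{m+1}\le\lfloor mN'/w'\rfloor$. Composing the resulting $\lambda+1$ estimates is legitimate because each bounded quantity is a nonnegative integer and both the floor map and multiplication by the positive factors $m(N-j)/(w-j)$ are monotone; this produces exactly the nested-floor expression.

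For the second inequality I would instead count co-incidences. For each $(i,a)$ the quantity $\binom{n_{i,a}}{2}$ is the number of unordered pairs of codewords both carrying symbol $a$ in coordinate $i$, so summing over $(i,a)$ gives $\sum_{i}\sum_{a\ne0}\binom{n_{i,a}}{2}=\sum_{\{c,c'\}}(\text{Hamming correlation of }c,c')\le\lambda\binom{\abs{C}}{2}$. Writing $\sum n_{i,a}^2=2\sum\binom{n_{i,a}}{2}+\sum n_{i,a}$ and applying Cauchy--Schwarz over the $mN$ terms of the incidence sum gives $\tfrac{w^2\abs{C}^2}{mN}\le\sum n_{i,a}^2\le\lambda\abs{C}(\abs{C}-1)+w\abs{C}$. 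Rearranging, $\abs{C}\,(w^2-mN\lambda)\le mN(w-\lambda)$; when $w^2>mN\lambda$ this already forces $\lambda<w$, and dividing gives $\abs{C}\le mN(w-\lambda)/(w^2-mN\lambda)$, whence the floor. For the $mN$ term of the minimum, one checks --- in this same regime, via the shortening/pigeonhole argument above or by invoking the first inequality --- that $A(N,w,\lambda)_{m+1}\le mN$, so the stated minimum is a valid upper bound.

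The conceptual steps are standard; the real work is bookkeeping. The non-binary alphabet enters only through the count $mN$ of coordinate--symbol ``flags'', and one must check carefully that fixing a flag and deleting its coordinate really produces a code with parameters $(N-1,w-1,\lambda-1)_{m+1}$ and that the floors compose correctly down the recursion. I expect the fussiest point to be the $mN$ term in the second bound, since it does not fall straight out of the Cauchy--Schwarz estimate; pinning down that $A(N,w,\lambda)_{m+1}\le mN$ whenever $w^2>mN\lambda$ is where I would be most careful.
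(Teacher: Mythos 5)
The paper gives no proof of this theorem --- it is quoted from the cited reference --- so your argument has to stand on its own. Most of it does: the nested-floor bound via shortening at a coordinate--symbol flag (codewords carrying symbol $a$ at coordinate $i$, delete that coordinate, parameters drop to $(N-1,w-1,\lambda-1)_{m+1}$, base case $\lambda=0$ by $n_{i,a}\le 1$) is correct, and your Cauchy--Schwarz/Plotkin count $\tfrac{w^2|C|^2}{mN}\le \lambda|C|(|C|-1)+w|C|$ correctly yields $|C|(w^2-mN\lambda)\le mN(w-\lambda)$ and hence the floor term of the second bound, including the observation that $w^2>mN\lambda$ forces $\lambda<w$.

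The genuine gap is exactly where you flagged discomfort: the $mN$ term of the minimum. You propose to get $A(N,w,\lambda)_{m+1}\le mN$ ``via the shortening/pigeonhole argument above or by invoking the first inequality,'' but neither route works. Take $m=2$, $N=16$, $w=10$, $\lambda=3$: then $w^2=100>96=mN\lambda$, the nested-floor bound evaluates to $105$ and your Plotkin floor to $56$, both far above $mN=32$, so neither of the two bounds you actually proved implies the claim; and the pigeonhole in the shortening step only produces some flag carried by at least $w+1$ codewords, which is nowhere near a contradiction with $|C|>mN$. A separate idea is needed. One clean way: identify each codeword $c$ with the $0$--$1$ indicator $x_c\in\mathbb{R}^{mN}$ of its $w$ flags and set $y_c=x_c-\tfrac{w}{mN}\mathbf{1}$, where $\mathbf{1}$ is the all-ones vector. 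For distinct codewords $\langle y_c,y_{c'}\rangle\le \lambda-\tfrac{w^2}{mN}<0$, each $y_c$ is a nonzero vector orthogonal to $\mathbf{1}$, and a family of nonzero vectors in a $d$-dimensional space with pairwise strictly negative inner products has size at most $d+1$; applied in the hyperplane $\mathbf{1}^{\perp}$ of dimension $mN-1$ this gives $|C|\le mN$. Without an argument of this kind (or an equivalent), the stated minimum is not established by your proof.
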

 
From the Johnson Bound for constant weight codes it follows \cite{MR1022081} that
\begin{align} \label{JB}
 \Phi(N,w,\la)\leq J(N,w,\la)& =\left\lfloor
\frac1w \left\lfloor \frac{N-1}{w-1}\left\lfloor \frac{N-2}{w-2} \left\lfloor \cdots \left\lfloor
\frac{N-\lambda}{w-\la}\right\rfloor \right\rfloor \cdots \right\rfloor\right.\right.\\[2mm]
& := \left\lfloor f(N,w,\la)\right\rfloor .
\end{align}

We note that the first bound in Theorem \ref{thm:JB} may also be found in \cite{OmraniEK04}.

Observe that by choosing a fixed linear ordering of the array entries, each codeword from a $ (\Lambda_1\times \Lambda_2  \cdots \times \Lambda_{n-1} \times T,w,\la)$ $ n $-D OOC $ C $  can be viewed as a binary constant weight ($ w $) code of length $ N=\Lambda_1 \Lambda_2  \cdots  \Lambda_{n-1}  T $. Moreover, by including the $ T $ distinct cyclic shifts of each codeword we obtain a corresponding constant weight binary code of size $ T \cdot |C| $.  It follows that
\begin{equation}\label{eqn: JB1}
|C|\le \left \lfloor \frac{A(N, w, \la)}{T}\right \rfloor
\end{equation} 

From  the equation (\ref{eqn: JB1}) and Theorem \ref{thm:JB} we obtain the following bounds for n-D OOCs. 

\begin{thm}[Johnson Bound for $ n $-D OOCs]\label{thm: ddjb2} 
	If $ C $ is an $ (\Lambda_1\times \Lambda_2  \cdots \times \Lambda_{n-1} \times T, w, \la) $ OOC, then
	\begin{align} \label{eqn:3djba} 
	\Phi(C) & \le J(\Lambda_1\times \Lambda_2  \cdots \times \Lambda_{n-1} \times T, w, \la)\\[2mm] & =  \left\lfloor
	\frac{N}{Tw} \left\lfloor \frac{N-1}{w-1}\left\lfloor 
	\cdots \left\lfloor
	\frac{ N -\lambda}{w-\la}\right\rfloor \right\rfloor \cdots \right\rfloor\right.
	\\[2mm] 
     & := \left\lfloor f(\Lambda_1\times \Lambda_2  \cdots \times \Lambda_{n-1} \times T, w, \la)\right\rfloor, 
	\end{align}
where $ N=\Lambda_1 \Lambda_2  \cdots  \Lambda_{n-1}T $. If $ w^2> N \lambda $ then
\begin{equation} \label{eqn:3djbb}
\Phi(C) \le \text{ min } \left\{\frac{N}{T},\left\lfloor \frac{\frac{N}{T}(w-\lambda)}{w^2- N\lambda} \right \rfloor \right\}.
\end{equation}	
\end{thm}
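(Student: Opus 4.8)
The plan is to obtain Theorem~\ref{thm: ddjb2} as a direct specialization of Theorem~\ref{thm:JB} to binary alphabets, funnelled through the shift‑orbit inequality (\ref{eqn: JB1}) that was set up in the paragraph preceding the statement. Recall that a fixed linear ordering of the array positions identifies each codeword of an $(\Lambda_1\times\cdots\times\Lambda_{n-1}\times T,w,\la)$ OOC $C$ with a binary constant‑weight‑$w$ vector of length $N=\Lambda_1\Lambda_2\cdots\Lambda_{n-1}T$, and adjoining the $T$ cyclic time‑shifts of each codeword produces a bona fide binary $(N,w,\la)$‑code of size $T\cdot|C|$; this is precisely (\ref{eqn: JB1}), namely $|C|\le\lfloor A(N,w,\la)/T\rfloor$. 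So the whole task reduces to bounding $A(N,w,\la)=A(N,w,\la)_2$ via Theorem~\ref{thm:JB} with $m=1$ and then tidying the nested floors.

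For the first inequality, putting $m=1$ in Theorem~\ref{thm:JB} gives $A(N,w,\la)\le J(N,w,\la)$, the quantity appearing in (\ref{JB}). Combining with (\ref{eqn: JB1}) yields $\Phi(C)=|C|\le\lfloor J(N,w,\la)/T\rfloor$. The one bookkeeping step is the elementary identity $\lfloor\lfloor x\rfloor/T\rfloor=\lfloor x/T\rfloor$, valid for any real $x$ and positive integer $T$: applying it to the outermost floor of $J(N,w,\la)$ absorbs the factor $1/T$ into the numerator $N$, producing exactly the right‑hand side of (\ref{eqn:3djba}).

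For the second inequality, note that the hypothesis $w^2>N\la$ is exactly the condition $w^2>mN\la$ of Theorem~\ref{thm:JB} specialized to $m=1$, so $A(N,w,\la)\le\min\{N,\,\lfloor N(w-\la)/(w^2-N\la)\rfloor\}$. Dividing by $T$ and taking floors, and using both $\lfloor\lfloor x\rfloor/T\rfloor=\lfloor x/T\rfloor$ and the monotonicity of $x\mapsto\lfloor x/T\rfloor$ (so the floor commutes with the $\min$), together with the observation that $N/T=\Lambda_1\cdots\Lambda_{n-1}$ is an integer and hence $\lfloor N/T\rfloor=N/T$, gives $\Phi(C)\le\min\{N/T,\,\lfloor(N/T)(w-\la)/(w^2-N\la)\rfloor\}$, which is (\ref{eqn:3djbb}).

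I do not expect a genuine obstacle here: the content is the shift‑orbit counting already carried out in the excerpt, and everything after it is a transcription of the non‑binary Johnson bound. The only places to be slightly careful are the manipulation of the nested floor functions — in particular justifying that $1/T$ may be pulled inside the outermost floor, and that the floor passes through the $\min$ — and making sure the case distinction invokes the $m=1$ form of the hypothesis $w^2>mN\la$ correctly.
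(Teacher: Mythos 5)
Your proposal is correct and follows the same route as the paper, which derives the theorem directly by combining the shift-orbit inequality (\ref{eqn: JB1}) with Theorem \ref{thm:JB} at $m=1$. The only addition is that you spell out the floor-function bookkeeping ($\lfloor\lfloor x\rfloor/T\rfloor=\lfloor x/T\rfloor$, commuting the floor with the $\min$, and integrality of $N/T$) which the paper leaves implicit.
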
	

We note that the  bounds in Theorem \ref{thm: ddjb2} subsume the Johnson type bounds on 1, 2, and 3-dimensional codes, such as those found in    \cite{Alderson20111187, MR1022081,Ortiz-Ubarri2011}. 
Moreover, we can see from the theorem, that in a certain sense, maximum capacity is more intrinsically linked to the time spreading length than to the other dimensions. 

\begin{cor}\label{cor: onlyTmatters}
If $N=\Lambda_1\cdot\Lambda_2\cdots\Lambda_{s-1} \cdot T=\Lambda'_1\cdot\Lambda'_2\cdots\Lambda'_{t-1} \cdot T$ where $ s , t \ge 1 $ then 
\begin{equation}\label{eqn: Johnson inequalities}
    J(\Lambda_1\times\cdots\times\Lambda_{s-1} \times T,w,\la)=  J(\Lambda'_1\times\cdots\times\Lambda'_{t-1} \times T,w,\la)
   \end{equation}
\end{cor}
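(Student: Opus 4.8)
The plan is to read the claim off directly from the closed form recorded in Theorem~\ref{thm: ddjb2}. There it is shown that
\[
J(\Lambda_1\times \Lambda_2  \cdots \times \Lambda_{s-1} \times T, w, \la)=\left\lfloor f(\Lambda_1\times \Lambda_2  \cdots \times \Lambda_{s-1} \times T, w, \la)\right\rfloor,
\]
where
\[
f(\Lambda_1\times \Lambda_2  \cdots \times \Lambda_{s-1} \times T, w, \la)=\frac{N}{Tw} \left\lfloor \frac{N-1}{w-1}\left\lfloor \cdots \left\lfloor\frac{ N -\la}{w-\la}\right\rfloor \right\rfloor \cdots \right\rfloor
\]
and $N=\Lambda_1\Lambda_2\cdots\Lambda_{s-1}T$. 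The crucial remark is that, for fixed $w$ and $\la$, the right-hand side of this last display is a function of the two numbers $N$ and $T$ alone: every spreading length $\Lambda_i$ occurs only inside the product defining $N$, the shifts appearing in the numerators are the shifts $N-1,N-2,\ldots,N-\la$ of $N$ and not of any individual $\Lambda_i$, and the leading factor is $N/(Tw)$ rather than something depending on a single coordinate length.

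First I would record the value $f(\Lambda_1\times\cdots\times\Lambda_{s-1}\times T,w,\la)$ for the unprimed frame as above. Next, for the primed frame I would write down $f(\Lambda'_1\times\cdots\times\Lambda'_{t-1}\times T,w,\la)$, which by the same formula of Theorem~\ref{thm: ddjb2} equals
\[
\frac{N'}{Tw}\left\lfloor \frac{N'-1}{w-1}\left\lfloor \cdots\left\lfloor \frac{N'-\la}{w-\la}\right\rfloor\right\rfloor\cdots\right\rfloor
\]
with $N'=\Lambda'_1\Lambda'_2\cdots\Lambda'_{t-1}T$. The hypothesis of the corollary is precisely that $N=N'$, and the time-spreading length $T$ is common to the two frames, so these two nested-floor expressions are literally the same real number. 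Applying $\lfloor\cdot\rfloor$ to a single real number yields a single integer, hence
\[
J(\Lambda_1\times\cdots\times\Lambda_{s-1}\times T,w,\la)=J(\Lambda'_1\times\cdots\times\Lambda'_{t-1}\times T,w,\la),
\]
which is (\ref{eqn: Johnson inequalities}). The boundary cases $s=1$ or $t=1$, where there are no spreading coordinates and $N=T$, require no separate treatment.

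I do not expect a genuine obstacle here; the only point deserving care is the verification singled out in the first paragraph, namely that in the nested-floor expression of Theorem~\ref{thm: ddjb2} no $\Lambda_i$ survives outside the product $N$, so that the bound depends on the non-time dimensions only through $N$. Once this is checked the corollary is immediate, and it is exactly the formal content of the informal remark preceding it that capacity is tied to the time-spreading length $T$ rather than to the individual spreading lengths. For completeness one might add a sentence observing that the same invariance holds for the second bound (\ref{eqn:3djbb}) in the range $w^2>N\la$, since $\min\{N/T,\lfloor (N/T)(w-\la)/(w^2-N\la)\rfloor\}$ is likewise a function of $N,T,w,\la$ only, although the statement as given concerns only $J$.
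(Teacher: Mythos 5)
Your proposal is correct and matches the paper's (implicit) justification: the corollary is stated as an immediate consequence of Theorem~\ref{thm: ddjb2}, whose nested-floor expression depends on the spreading lengths only through $N=\Lambda_1\cdots\Lambda_{s-1}T$ together with $T$, $w$, $\la$, so equal $N$ and equal $T$ force equal bounds. Your added care in checking that no individual $\Lambda_i$ survives outside the product $N$ is exactly the right (and only) point to verify.
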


Some easy arithmetic gives the following.

\begin{lem}\label{lem: bound relationship}
If $N=\Lambda_1\cdot\Lambda_2\cdots\Lambda_{n-1} \cdot T$, then 
\begin{align}\label{eqn: Johnson inequalities}
    \frac{N}{T}\cdot J(N,w,\la) & \le J(\Lambda_1\times\cdots\times\Lambda_{n-1} \times T,w,\la)\\
    &\le \frac{N}{T}\cdot J(N,w,\la)+\frac{N}{T}-1
\end{align}
In particular, if $f(N,w,\la)-J(N,w,\la)<\frac{T}{N}$  (such as the case in which $f(N,w,\la)$ is integral) then
    \begin{equation}
    \frac{N}{T}\cdot J(N,w,\la)= J(\Lambda_1\times\cdots\times\Lambda_{n-1} \times  T,w,\la).
    \end{equation}
\end{lem}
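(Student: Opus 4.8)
The plan is to reduce the whole statement to the elementary fact that for a positive integer $M$ and a real number $x$ one has $M\lfloor x\rfloor \le \lfloor Mx\rfloor \le M\lfloor x\rfloor + M - 1$. First I would observe that the nested floor appearing in $J(N,w,\la)$ and the one appearing in $J(\Lambda_1\times\cdots\times\Lambda_{n-1}\times T,w,\la)$ differ only in their outermost factor. Writing
\[
g := \left\lfloor \frac{N-1}{w-1}\left\lfloor \frac{N-2}{w-2}\left\lfloor \cdots \left\lfloor \frac{N-\la}{w-\la}\right\rfloor\right\rfloor\cdots\right\rfloor\right\rfloor \in \mathbb{Z},
\]
the definitions in \eqref{JB} and in Theorem \ref{thm: ddjb2} give $f(N,w,\la) = g/w$ and, because $N=\Lambda_1\cdots\Lambda_{n-1}T$ (so the inner kernel is literally the same expression), $f(\Lambda_1\times\cdots\times\Lambda_{n-1}\times T,w,\la) = \tfrac{N}{Tw}\,g = \tfrac{N}{T}\,f(N,w,\la)$. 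Hence $J(N,w,\la)=\lfloor f(N,w,\la)\rfloor$ and $J(\Lambda_1\times\cdots\times\Lambda_{n-1}\times T,w,\la)=\bigl\lfloor \tfrac{N}{T}\,f(N,w,\la)\bigr\rfloor$, and crucially $M:=N/T=\Lambda_1\cdots\Lambda_{n-1}$ is a positive integer.

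Next I would prove the elementary inequality. Put $x:=f(N,w,\la)$, so that $\lfloor x\rfloor = J(N,w,\la)$, and write $\{x\}=x-\lfloor x\rfloor\in[0,1)$. Then $Mx = M\lfloor x\rfloor + M\{x\}$ with $M\lfloor x\rfloor\in\mathbb{Z}$, so $\lfloor Mx\rfloor = M\lfloor x\rfloor + \lfloor M\{x\}\rfloor$. Since $0\le M\{x\}<M$, we have $0\le \lfloor M\{x\}\rfloor\le M-1$, which is exactly
\[
\frac{N}{T}\,J(N,w,\la)\ \le\ J(\Lambda_1\times\cdots\times\Lambda_{n-1}\times T,w,\la)\ \le\ \frac{N}{T}\,J(N,w,\la)+\frac{N}{T}-1 .
\]

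Finally, for the ``in particular'' clause, the hypothesis $f(N,w,\la)-J(N,w,\la)<T/N$ says precisely $\{x\}<1/M$, hence $M\{x\}<1$ and $\lfloor M\{x\}\rfloor=0$; substituting into the identity above gives $\lfloor Mx\rfloor = M\lfloor x\rfloor$, i.e. $\tfrac{N}{T}\,J(N,w,\la)=J(\Lambda_1\times\cdots\times\Lambda_{n-1}\times T,w,\la)$. The case where $f(N,w,\la)$ is an integer is the special case $\{x\}=0$. There is essentially no serious obstacle here: the only points requiring care are the bookkeeping that the inner kernel $g$ really is identical in the one‑dimensional and $n$‑dimensional Johnson expressions (true because only the leading coefficient changes from $1/w$ to $N/(Tw)$, the recursions $\lfloor (N-i)/(w-i)\lfloor\cdots\rfloor\rfloor$ being the same), and the observation that $N/T=\Lambda_1\cdots\Lambda_{n-1}$ is a genuine positive integer, which is what legitimizes the step $\lfloor Mx\rfloor = M\lfloor x\rfloor + \lfloor M\{x\}\rfloor$.
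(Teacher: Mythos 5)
Your proof is correct and is exactly the ``easy arithmetic'' the paper alludes to (the paper gives no explicit proof): since the nested inner floor $g$ is identical in the two Johnson expressions, the claim reduces to the standard identity $\lfloor Mx\rfloor = M\lfloor x\rfloor + \lfloor M\{x\}\rfloor$ with $M=N/T\in\mathbb{Z}_{>0}$, which yields both inequalities and the equality case when $\{x\}<1/M$. No gaps; your bookkeeping that only the leading factor changes from $1/w$ to $N/(Tw)$ is the key observation and it is accurate.
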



\begin{cor}\label{cor: bound relationship_n>1}
Let $N=\Lambda_1\cdot\Lambda_2\cdots\Lambda_{n-1} \cdot T$ where $T=\Lambda_n\cdot T'$.  If $f(\Lambda_1\times\cdots\times\Lambda_{n-1} \times T ,w,\la)-J(\Lambda_1\times\cdots\times\Lambda_{n-1} \times T ,w,\la)<\frac{1}{\Lambda_n}$, then
    \begin{align}
    \Lambda_n\cdot J(\Lambda_1\times\cdots\times\Lambda_{n-1} \times T ,w,\la)& = J(\Lambda_1\times\cdots\times\Lambda_{n-1}\Lambda_n \times T' ,w,\la)\\
    & =J(\Lambda_1\times\cdots\times\Lambda_{n-1}\times \Lambda_n \times T' ,w,\la).
    \end{align}
\end{cor}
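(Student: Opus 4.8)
The plan is to reduce the claim to the elementary fact that, for a positive integer $\Lambda_n$ and a real number $g$, one has $\lfloor \Lambda_n g\rfloor=\Lambda_n\lfloor g\rfloor$ as soon as the fractional part of $g$ is smaller than $1/\Lambda_n$. This is the ``one spreading-coordinate at a time'' analogue of Lemma~\ref{lem: bound relationship}, whose proof it mirrors, with an $n$-dimensional code (rather than a $1$-dimensional one) playing the role of the base.

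First I would unwind the definition of $f$ from Theorem~\ref{thm: ddjb2}. Since $N=\Lambda_1\cdots\Lambda_{n-1}T=\Lambda_1\cdots\Lambda_{n-1}\Lambda_nT'$, the nested floor expression $\lfloor\frac{N-1}{w-1}\lfloor\cdots\lfloor\frac{N-\la}{w-\la}\rfloor\cdots\rfloor\rfloor$ depends only on $N$, $w$, and $\la$; denote this integer by $K$. Then $f(\Lambda_1\times\cdots\times\Lambda_{n-1}\times T,w,\la)=\frac{N}{Tw}K$ while $f(\Lambda_1\times\cdots\times\Lambda_{n-1}\times\Lambda_n\times T',w,\la)=\frac{N}{T'w}K$, and because $T=\Lambda_nT'$ this yields the exact scaling $f(\Lambda_1\times\cdots\times\Lambda_n\times T',w,\la)=\Lambda_n\,f(\Lambda_1\times\cdots\times\Lambda_{n-1}\times T,w,\la)$. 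The only thing to verify here is that the inner nested floors genuinely coincide for the two codes, which they do precisely because $N$ is unchanged.

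Next, writing $g:=f(\Lambda_1\times\cdots\times\Lambda_{n-1}\times T,w,\la)$ and recalling $J=\lfloor f\rfloor$, the hypothesis $f(\Lambda_1\times\cdots\times\Lambda_{n-1}\times T,w,\la)-J(\Lambda_1\times\cdots\times\Lambda_{n-1}\times T,w,\la)<1/\Lambda_n$ says exactly that the fractional part of $g$ is less than $1/\Lambda_n$. Then $\Lambda_n g=\Lambda_n\lfloor g\rfloor+\Lambda_n(g-\lfloor g\rfloor)$ with $\Lambda_n\lfloor g\rfloor\in\mathbb{Z}$ and $0\le\Lambda_n(g-\lfloor g\rfloor)<1$, so $\lfloor\Lambda_n g\rfloor=\Lambda_n\lfloor g\rfloor$. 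Combining this with the scaling from the previous step gives $J(\Lambda_1\times\cdots\times\Lambda_n\times T',w,\la)=\Lambda_n\, J(\Lambda_1\times\cdots\times\Lambda_{n-1}\times T,w,\la)$, which is the first claimed equality. The second equality follows immediately from Corollary~\ref{cor: onlyTmatters}, since the codes $(\Lambda_1\times\cdots\times\Lambda_{n-1}\Lambda_n\times T')$ and $(\Lambda_1\times\cdots\times\Lambda_{n-1}\times\Lambda_n\times T')$ have the same product of spreading lengths, namely $N/T'$, and the same time-spreading length $T'$.

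The whole argument is bookkeeping, so I do not anticipate a real obstacle; the only mild subtleties are translating the hypothesis $f-J<1/\Lambda_n$ into the fractional-part bound (immediate, as $J=\lfloor f\rfloor$) and keeping straight which equality comes from the floor identity and which from Corollary~\ref{cor: onlyTmatters}.
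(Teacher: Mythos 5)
Your argument is correct and matches the route the paper intends: the paper leaves this as ``easy arithmetic'' in the style of Lemma~\ref{lem: bound relationship}, namely that $f$ scales exactly by $\Lambda_n$ when the time-spreading length drops from $T=\Lambda_nT'$ to $T'$ (the inner nested floors depend only on $N,w,\la$), so the hypothesis on the fractional part lets the outer floor commute with multiplication by $\Lambda_n$, and the remaining equality is Corollary~\ref{cor: onlyTmatters}. No gaps; your bookkeeping is exactly the intended proof.
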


As observed in \cite{Alderson2017} for 3-dimensional OOCs,  an $ n $-D OOC $ C $ with $ \la_a=0 $ can be viewed as a constant weight ($ w $) code of length $ \frac{N}{T} =\Lambda_1 \Lambda_2  \cdots  \Lambda_{n-1} $ over an alphabet of size $ T+1$ containing zero. By including the $ T $ distinct cyclic shifts of each codeword we obtain a corresponding constant weight code of size $ T \cdot |C| $.

It follows that
\begin{equation}\label{eqn: JB}
|C|\le \left \lfloor \frac{A(\frac{N}{T}, w, \la)_{T+1}}{T}\right \rfloor .
\end{equation} 

From Theorem \ref{thm:JB} and the equation (\ref{eqn: JB}) we obtain the following bound for ideal $  $n-D OOCs.

\begin{thm}\label{thm:ideal3djb}[Johnson Bound for Ideal n-D OOC]\\
	Let $ C $ be an $ (\Lambda_1\times\cdot\times\Lambda_{n-1} \times T,w,0,\la_c) $ OOC, then 
\begin{align} 
\Phi(C) & \le J(Ideal) 
\nonumber \\[1ex]
& \label{eqn: joptimal ideal}= \left\lfloor
\frac{N}{Tw} \left\lfloor \frac{ N-T}{w-1}%
\left\lfloor \frac{N-2T}{w-2} %
\left\lfloor \cdots \left\lfloor \frac{N-\lambda T}{w-\la_c}\right\rfloor \right\rfloor \cdots \right\rfloor\right.\right.
\end{align}      
where  $N=\Lambda_1\cdot\Lambda_2\cdots\Lambda_{n-1} \cdot T$. In particular, if $ C $ has (maximal) weight  $w =  \frac{N}{T} $, then $ \Phi(C)\le  T^\lambda $. 	
\end{thm}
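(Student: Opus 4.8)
The plan is to obtain the bound directly from the chain~\eqref{eqn: JB} together with the nonbinary Johnson bound, Theorem~\ref{thm:JB}. Since $ \la_a=0 $, each codeword of $ C $ -- after fixing a linear order on its $ \Lambda_1\times\cdots\times\Lambda_{n-1} $ non-time positions -- together with its $ T $ distinct cyclic time-shifts produces a constant-weight-$ w $ code of length $ N/T=\Lambda_1\cdots\Lambda_{n-1} $ over the $ (T+1) $-letter alphabet $ \{0,1,\dots,T\} $, in which any two of the $ T|C| $ resulting words have at most $ \la_c $ nonzero agreements (off-peak autocorrelation $ 0 $ handles shifts of a single codeword, cross-correlation $ \la_c $ handles the rest). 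This is exactly the inequality $ |C|\le\bigl\lfloor A(N/T,w,\la_c)_{T+1}/T\bigr\rfloor $ of~\eqref{eqn: JB}, so it remains only to expand $ A(N/T,w,\la_c)_{T+1} $ and simplify.

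First I would apply the first inequality of Theorem~\ref{thm:JB} with length $ N/T $, weight $ w $, correlation $ \la_c $, and alphabet parameter $ m=T $; this is legitimate because $ N=\Lambda_1\cdots\Lambda_{n-1}T $ forces $ T\mid N $, so $ N/T $ is a positive integer. Each factor of the nested product then collapses,
\[
\frac{m\bigl((N/T)-i\bigr)}{w-i}=\frac{T\bigl((N/T)-i\bigr)}{w-i}=\frac{N-iT}{w-i}\qquad(i=0,1,\dots,\la_c),
\]
giving $ A(N/T,w,\la_c)_{T+1}\le\bigl\lfloor\tfrac{N}{w}\bigl\lfloor\tfrac{N-T}{w-1}\bigl\lfloor\cdots\bigl\lfloor\tfrac{N-\la_c T}{w-\la_c}\bigr\rfloor\cdots\bigr\rfloor\bigr\rfloor\bigr\rfloor $. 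Dividing by $ T $ and taking the outer floor, I would invoke the elementary nested-floor identity $ \bigl\lfloor\lfloor y\rfloor/k\bigr\rfloor=\lfloor y/k\rfloor $ (valid for real $ y $ and positive integer $ k $) to absorb the leading $ 1/T $: with $ X $ denoting the integer value of the inner nest, $ \bigl\lfloor\tfrac1T\lfloor\tfrac{N}{w}X\rfloor\bigr\rfloor=\bigl\lfloor\tfrac{N}{Tw}X\bigr\rfloor $. That is precisely the expression~\eqref{eqn: joptimal ideal} defining $ J(Ideal) $, which proves the main assertion.

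For the ``in particular'' part, set $ w=N/T $, the largest weight possible since the collapsed codeword has length $ N/T $. Then $ \tfrac{N}{Tw}=1 $, and for every $ i=0,1,\dots,\la_c $
\[
\frac{N-iT}{w-i}=\frac{T\bigl((N/T)-i\bigr)}{(N/T)-i}=T,
\]
so the nested floors in~\eqref{eqn: joptimal ideal} collapse from the inside outward to $ T,\,T^{2},\,\dots,\,T^{\la_c} $, and finally $ \lfloor 1\cdot T^{\la_c}\rfloor=T^{\la_c} $; hence $ \Phi(C)\le J(Ideal)=T^{\la_c} $. (One can also see this without the formula: with full weight every position of the collapsed codeword is nonzero, so the $ T|C| $ shifted words form a code in $ \{1,\dots,T\}^{N/T} $ with pairwise Hamming distance at least $ N/T-\la_c $, and the Singleton bound bounds their number by $ T^{\la_c+1} $.)

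There is no real obstacle -- the proof is a substitution into Theorem~\ref{thm:JB} followed by bookkeeping -- and the only points that deserve a moment's care are the divisibility $ T\mid N $ that legitimizes the parameters $ (N/T,w,\la_c)_{T+1} $, the nested-floor identity used to pull $ 1/T $ inside the brackets, and the standing hypothesis $ w>\la_c $ (together with $ \la_c<N/T $ in the special case) needed for the denominators to be positive and the bound to be meaningful.
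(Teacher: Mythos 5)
Your proposal is correct and follows essentially the same route as the paper: since $\la_a=0$ forces at most one pulse per time-row, the code collapses to a constant-weight-$w$ code of length $N/T$ over an alphabet of size $T+1$, the $T$ cyclic shifts give inequality (\ref{eqn: JB}), and Theorem \ref{thm:JB} with $m=T$ yields $J(Ideal)$ after the substitution $T\bigl((N/T)-i\bigr)/(w-i)=(N-iT)/(w-i)$ and the nested-floor absorption of $1/T$. Your treatment is, if anything, more detailed than the paper's (which simply cites (\ref{eqn: JB}) and Theorem \ref{thm:JB}), and the $w=N/T$ special case is handled the same way.
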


Note that the bound (\ref{eqn: joptimal ideal}) is tight in certain cases, see \textit{e.g.} the codes constructed in \cite{1523307}.

\subsection{Ideal Codes and Sections}

Suppose $ A $ is a codeword from an  $ n $-dimensional  $ (\Lambda_1\times\Lambda_2\times \cdot\times\Lambda_{n-1} \times T,w,\la_a,\la_c) $ OOC.  For any fixed $ i $, $ 1\le i \le n-1 $, a $ \Lambda_i $ plane of $ A $ may be considered as an $ (n-1) $-dimensional array. Such a plane is called a $ \Lambda_i $ \textit{section}, or an \textit{ $ i $-section} of $ A $.
  
\begin{figure}[H]
	\centering
	\subfloat[]
	{\begin{tikzpicture}[on grid, scale=.7]
	\draw[yslant=-0.5,]  (0,0) rectangle +(3,3);
	\draw[yslant=-0.5] (0,0) grid (3,3);
	
	\foreach \x in {1,2,3}{
		\node  at (\x -0.6, -\x/2 -0.2) {$ s_{\x} $};
		\node at (-0.4,3-\x +0.5 ) {$ \lambda_{\x} $};
		\node  at (\x -0.7,  \x/2 +3.1) {$ t_{\x} $};		
	}
	
	\foreach \x in {2}{
		\foreach \y in {1}{
			\fill[yslant=-0.5, gray] (\x,\y) rectangle +(1,1);
	}}
	\foreach \x in {0}{
		\foreach \y in {2}{
			\fill[yslant=-0.5, gray] (\x,\y) rectangle +(1,1);
	}}
	\draw[yslant=0.5] (3,-3) rectangle +(3,3);
	\draw[yslant=0.5] (3,-3) grid (6,0);
	\foreach \x in {5}{
		\foreach \y in {-1}{
			\fill[yslant=0.5, gray] (\x,\y) rectangle +(1,1);
	}}
	\foreach \x in {3}{
		\foreach \y in {-2}{
			\fill[yslant=0.5, gray] (\x,\y) rectangle +(1,1);
	}}
	\draw[yslant=0.5,xslant=-1,] (6,3) rectangle +(-3,-3);
	\draw[yslant=0.5,xslant=-1] (3,0) grid (6,3);
	\foreach \x in {5}{
		\foreach \y in {0}{
			\fill[yslant=0.5,,xslant=-1, gray] (\x,\y) rectangle +(1,1);
	}}
	\foreach \x in {3}{
		\foreach \y in {2}{
			\fill[yslant=0.5,,xslant=-1, gray] (\x,\y) rectangle +(1,1);
	}} 
	\end{tikzpicture} \label{Fig:3d word}}
	\subfloat[]\;\;\;
{\begin{tikzpicture}[on grid, scale=.7]
	\draw[yslant=-0.5,]  (0,0) rectangle +(1,3);
	\draw[yslant=-0.5] (0,0) grid (1,1);
	
	\node  at (1 -0.6, -1/2 -0.2) {$ s_{1} $};
	
	\foreach \x in {0}{
		\foreach \y in {2}{
			\fill[yslant=-0.5, gray] (\x,\y) rectangle +(1,1);
	}}
	
	\draw[yslant=0.5] (1,-1) rectangle +(3,3);
	\draw[yslant=0.5] (1,-1) grid (4,2);
	
	\foreach \x in {1}{
		\foreach \y in {1}{
			\fill[yslant=0.5, gray] (\x,\y) rectangle +(1,1);
	}}
	
	\draw[yslant=0.5,xslant=-1,] (3,2) rectangle +(3,1);
	\draw[yslant=0.5,xslant=-1] (3,2) grid (6,3);
	
	\foreach \x in {3}{
		\foreach \y in {2}{
			\fill[yslant=0.5,,xslant=-1, gray] (\x,\y) rectangle +(1,1);
	}}
	\end{tikzpicture}\label{Fig:2sec}}\;\;\;
	\subfloat[]{\begin{tikzpicture}[on grid, scale=.7]
	
	\foreach \x in {0}{
		\foreach \y in {2}{
			\fill[yslant=-0.5, gray] (\x,\y) rectangle +(1,1);
	}}
	
	\foreach \x in {5}{
		\foreach \y in {-1}{
			\fill[yslant=0.5, gray] (\x,\y) rectangle +(1,1);
	}}

	\foreach \x in {5}{
		\foreach \y in {0}{
			\fill[yslant=0.5,,xslant=-1, gray] (\x,\y) rectangle +(1,1);
	}}
	\foreach \x in {3}{
		\foreach \y in {2}{
			\fill[yslant=0.5,,xslant=-1, gray] (\x,\y) rectangle +(1,1);
	}} 
	
	\node  at (-0.6, 2.5) {$ \lambda_{1} $};
		
	\draw[yslant=-0.5] (0,2) grid (3,3);
	
	\draw[yslant=0.5] (3,-1) rectangle +(3,1);
	
	\draw[yslant=0.5] (3,-1) grid (6,0);
		
	\draw[yslant=0.5,xslant=-1] (3,0) grid (6,3);
	\end{tikzpicture}\label{Fig:1sec} }
\caption{ Two sections of a 3-D, $ \Lambda\times S\times T $ ($ =\Lambda_1\times \Lambda_2 \times T $) codeword.  Figure (b) is a  2-section, whereas  (a) is a 1-section.}  \label{Fig:2}
\end{figure}
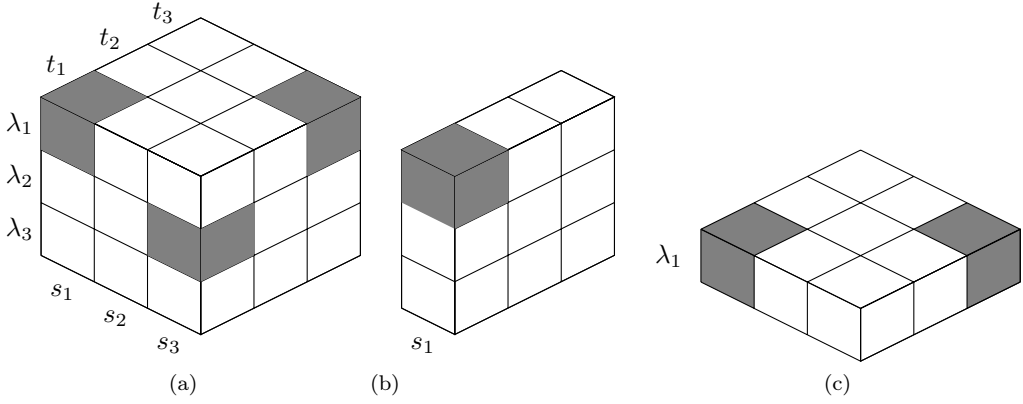
For $ i\ne j $, the intersection of an $ i $-section and a $ j $-section is a section of degree $ 2 $, denoted an $ (i,j) $-section.  
 A section of degree $ t\ge 3 $ is defined in the  analogous way, denoted an $ (i_1,i_1,\ldots,i_t) $-section.

One way to ensure an $ n $-D OOC is ideal, is to restrict the code to having \textit{at most one pulse per $ i $-section}, for some fixed $ i $. Such a code is said to be  AMOPS$(i) $.  For 2-D OOCs these are the At Most One Pulse Per Wavelength (AMOPW) codes \cite{1523307,1315909,Alderson20111187}. For 3-D codes these are At-Most-One-Pulse-per-Plane (AMOPP) codes \cite{MR3320355,AldersonWSEAS2017}.\\
If an $ n $-D OOC, $ C $, is restricted to having at most one pulse per $ (i_1,i_2,\ldots,i_j) $-section, where $ 1\le j \le n-1 $, then $ C $ will be  ideal, and is said to have \textit{at most one pulse per section of degree $ j $}, and is denoted an AMOPS$(i_1,i_2,\ldots,i_j)$  code. If such a code has exactly one pulse per $(i_1,i_2,\ldots,i_j) $-section, then it is said to have a \textit{single pulse per section of degree $ j $}, and is denoted an SPS$(i_1,i_2,\ldots,i_j)$ code.   An ideal $ n $-dimensional OOC is necessarily AMOPS$ (1,2,\ldots,n-1) $.  It is readily seen  that an   AMOPS$(i_1,i_2,\ldots,i_j) $ corresponds to a
constant weight 1-dimensional code of length $ m=\Lambda_{i_1}\cdot\Lambda_{i_2} \cdots \Lambda_{i_j} $ over an alphabet of size  $ \frac{N}{m}+1 $ (containing zero). Consequently, we obtain the following
bounds on AMOPS codes.

\begin{thm}\label{thm:amopsjb}\label{thm:amopsjb}[Johnson Bound for AMOPS codes]

Let $ C $ be an  (ideal) $ (\Lambda_1\times\cdot\times\Lambda_{n-1} \times T,w,0,\la) $-AMOPS${(i_1,i_2,\ldots,i_j)}$  OOC, where $ j\ge 1 $ then 
\begin{align} 
\Phi(C)  
& \le J(AMOPS) %
\nonumber \\[1ex]
& \label{eqn:JBamopsj} = \left\lfloor
\frac{N}{Tw} \left\lfloor \frac{ N\left(1-\frac{1}{M}\right)}{w-1}%
\left\lfloor \frac{ N\left(1-\frac{2}{M}\right)}{w-2} %
\left\lfloor \cdots \left\lfloor \frac{ N\left(1-\frac{\lambda}{M}\right)}{w-\la}\right\rfloor \right\rfloor \cdots \right\rfloor\right.\right.\\[1ex] 
&  \nonumber \le J(Ideal) %
\end{align}  
where $N=\Lambda_1\cdot\Lambda_2\cdots\Lambda_{n-1} \cdot T$, and $ M=\Lambda_{i_1}\cdot\Lambda_{i_2}\cdots\Lambda_{i_{j}} $.    
In the extremal case where $ w = M $, the bound (\ref{eqn:JBamopsj}) simplifies to $\displaystyle   \frac{N^{\lambda+1}}{TM^{\lambda+1}} $.\\ 
In particular, if  $ C $ is an $ (\Lambda_1\times\cdot\times\Lambda_{n-1} \times T,w,0,\la) $-AMOPS$(i)$ OOC, then 
\begin{align} 
\Phi(C) & \le  \label{eqn: JB AMOPSi} \left\lfloor
\frac{N}{Tw} \left\lfloor \frac{ N\left(1-\frac{1}{\Lambda_i}\right)}{w-1}%
\left\lfloor \frac{ N\left(1-\frac{2}{\Lambda_i}\right)}{w-2} %
\left\lfloor \cdots \left\lfloor \frac{ N\left(1-\frac{\lambda}{\Lambda_i}\right)}{w-\la}\right\rfloor \right\rfloor \cdots \right\rfloor\right.\right.
\end{align}  
where $N=\Lambda_1\cdot\Lambda_2\cdots\Lambda_{n-1} \cdot T$. 
In the extremal case where $ w =  \Lambda_i  $, the bound (\ref{eqn: JB AMOPSi}) simplifies to $\displaystyle  T^\lambda \prod_{j\ne i}\Lambda_j^{\lambda+1} $.      
\end{thm}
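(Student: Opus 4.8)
The plan is to reduce the assertion to the non-binary Johnson bound of Theorem~\ref{thm:JB}, following the same route as Theorem~\ref{thm:ideal3djb} but using the ``section projection'' of a codeword in place of its time projection. Put $M=\Lambda_{i_1}\cdots\Lambda_{i_j}$, so that the $N$ cells of a codeword-array $A$ split into $M$ degree-$j$ sections, each of $N/M$ cells; since $C$ is AMOPS$(i_1,\ldots,i_j)$ each such section carries at most one pulse (in particular $w\le M$). Hence $A$ is faithfully represented by a word of length $M$, indexed by the $M$ sections, whose symbol in section $k$ is $0$ if that section is empty and otherwise names which of the $N/M$ cells of the section holds the pulse --- a constant-weight-$w$ word over an alphabet of size $N/M+1$ containing $0$. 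The first thing I would check is that this representation is compatible with the $T$ cyclic time-shifts: because the time coordinate is not one of $i_1,\ldots,i_j$, a time-shift permutes the cells inside each section and so preserves the AMOPS property, and the Hamming correlation of the representations of two (possibly shifted) codewords equals the number of cells holding a pulse in both arrays, i.e. their correlation at relative time-shift $0$. Thus the $T|C|$ time-shifts of the codewords of $C$, so represented, form a constant-weight-$w$ code of length $M$ over an alphabet of size $N/M+1$ with maximum Hamming correlation $\lambda$ (distinct shifts of a single codeword contribute $0$ since $\lambda_a=0$, shifts of distinct codewords contribute at most $\lambda_c=\lambda$); exactly as in the derivation of (\ref{eqn: JB}) these representations are pairwise distinct, whence $T|C|\le A(M,w,\lambda)_{N/M+1}$ and so $\Phi(C)\le\lfloor A(M,w,\lambda)_{N/M+1}/T\rfloor$.

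Next I would feed this into Theorem~\ref{thm:JB} with the substitution $N\mapsto M$ and alphabet size $m+1\mapsto N/M+1$ (that is, $m\mapsto N/M$), keeping $w$ and $\lambda$. The $k$-th factor of the nested floor then equals $\frac{(N/M)(M-k)}{w-k}=\frac{N(1-k/M)}{w-k}$, and the elementary identity $\lfloor\lfloor x\rfloor/T\rfloor=\lfloor x/T\rfloor$ lets one absorb the division by $T$ into the outermost floor; this produces precisely the expression (\ref{eqn:JBamopsj}) for $J(AMOPS)$, and (\ref{eqn: JB AMOPSi}) is just the case $j=1$, $M=\Lambda_i$.

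Finally I would verify $J(AMOPS)\le J(Ideal)$ and simplify the extremal case. Since $M$ is a product of a subset of $\{\Lambda_1,\ldots,\Lambda_{n-1}\}$ and each $\Lambda_\ell\ge1$, we have $M\le N/T$, hence $N/M\ge T$ and $N(1-k/M)=N-kN/M\le N-kT$ for every $0\le k\le\lambda$; as the nested-floor expression is monotone nondecreasing in each (positive) numerator, the bound (\ref{eqn:JBamopsj}) is dominated by the ideal bound (\ref{eqn: joptimal ideal}), with equality when $j=n-1$. In the extremal case $w=M$ every factor collapses, $\frac{N(1-k/M)}{w-k}=\frac{N(M-k)/M}{M-k}=\frac{N}{M}$, and $\frac{N}{M}=T\prod_{\ell\notin\{i_1,\ldots,i_j\}}\Lambda_\ell$ is a positive integer divisible by $T$; hence the inner $\lambda$ floors evaluate one after another to $N/M,(N/M)^2,\ldots,(N/M)^\lambda$ and the outermost collapses to $(N/M)^{\lambda+1}/T=N^{\lambda+1}/(TM^{\lambda+1})$, which for $j=1$, $M=\Lambda_i$ reads $T^\lambda\prod_{\ell\ne i}\Lambda_\ell^{\lambda+1}$. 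I expect the only step requiring real care --- rather than routine substitution and monotonicity of floors --- to be the cyclic-shift bookkeeping of the first paragraph: confirming that the time-shift acts within each degree-$j$ section and that the Hamming correlation of the section representations coincides with the zero-shift correlation of the arrays.
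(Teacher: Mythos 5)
Your proposal is correct and follows essentially the same route as the paper: the paper obtains the bound precisely by viewing an AMOPS$(i_1,\ldots,i_j)$ codeword as a constant-weight word of length $M=\Lambda_{i_1}\cdots\Lambda_{i_j}$ over an alphabet of size $\frac{N}{M}+1$, adjoining the $T$ cyclic time-shifts, and applying the non-binary Johnson bound of Theorem~\ref{thm:JB} before dividing by $T$. Your write-up merely makes explicit the shift bookkeeping, the comparison $J(AMOPS)\le J(Ideal)$, and the extremal-case simplification, all of which the paper leaves as "readily seen."
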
	

The bound  (\ref{eqn:JBamopsj}) is tight in certain cases, see \textit{e.g.} the codes constructed in \cite{ Alderson2017, Alderson20111187,Kim2000, 1315909, Li2012,MR3320355}. We also note that the bound  (\ref{eqn:JBamopsj})  reduces to the bound in Theorem \ref{thm:ideal3djb} when $ j=n-1. $


%

\section{Iterative Constructions of Optimal n-D OOCs}\label{sec: 1D give 2D}

Suppose $ C $ is a $ (\Lambda\times T, w , \la_a,\la_c) $ $ 2 $-D OOC where $ \Lambda=\Lambda_1\cdot\Lambda_2 $. Each codeword in $ C $ can be considered as a $ \Lambda\times T $ array. Let $ X\in C $ where $ X=(x_{i,j}) $. We may construct a corresponding 3-D $ \Lambda_1\times \Lambda_2\times T $ codeword $ Y_x=(y_{i,j,k}) $, $ 0\le i <\Lambda_1,0\le j<\Lambda_2, 0\le k<T $, where    
 \begin{equation}
 y_{i,j,k} = c_{i+j\Lambda_1,k}.
 \end{equation}

It is readily verified that $ C'=\{Y_x\mid x\in  C\} $ is a $ (\Lambda_1\times \Lambda_2\times T, w, \la_a,\la_c) $ $ 3 $-D OOC with $ |C'|=|C|$. Inductively we arrive at the following.

\begin{lem}\label{lem: onlyTmatters_capacity}
Let $ \Lambda=\Lambda_1\cdot\Lambda_2\cdots\Lambda_{s-1} $. There exists an $ (\Lambda\times T, w, \la_a,\la_c) $ 2-D OOC with capacity $ M $ if and only if there exists an $ (\Lambda_1\times\Lambda_2\cdots \times \Lambda_{s-1} \times T, w, \la_a,\la_c) $ $ s $-D OOC with capacity $ M $. 
\end{lem}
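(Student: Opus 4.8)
The plan is to prove the statement by induction on $s$, the engine being the explicit reshaping map exhibited just above the lemma. For the induction it is cleanest to record a single bijection: writing each integer $a\in\{0,1,\dots,\Lambda-1\}$ in mixed radix as $a = a_1 + a_2\Lambda_1 + a_3\Lambda_1\Lambda_2 + \cdots + a_{s-1}\Lambda_1\cdots\Lambda_{s-2}$ with $0\le a_i<\Lambda_i$, the assignment $a\leftrightarrow(a_1,\dots,a_{s-1})$ is a bijection between the index set of the single spreading coordinate of a $2$-D word and the index set of the $s-1$ spreading coordinates of an $s$-D word. Given a codeword $X=(x_{a,k})$ of a $(\Lambda\times T,w,\la_a,\la_c)$ $2$-D OOC, define $Y_X = (y_{a_1,\dots,a_{s-1},k})$ by $y_{a_1,\dots,a_{s-1},k} = x_{a,k}$; this is precisely the $s$-fold iterate of the $2$-D-to-$3$-D construction preceding the lemma, so in particular the assertion is trivial for $s=2$ and is exactly the worked case for $s=3$.

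The key steps are then: (i) $Y_X$ has the same Hamming weight $w$ as $X$, since the reshaping only relabels entries; (ii) $X\mapsto Y_X$ is injective, hence it carries a code of size $M$ to a set of size $M$; and (iii) the off-peak autocorrelation and cross-correlation values are unchanged. Point (iii) is the heart of the matter, but it is immediate once one notes that the correlation sums in the definition of an $n$-D OOC run over all spreading indices together with the shifted time index $i_n\mapsto i_n+t\pmod T$, while our bijection fixes the time coordinate $k$ entirely and merely permutes the spreading coordinates; thus for every codeword $A$ and every shift $t$ the autocorrelation sum for $Y_A$ is literally the same sum of products $x_{a,k}x_{a,k+t}$ (over all $a$ and all $k$) as the autocorrelation sum for $A$, and likewise the cross-correlation of $Y_A,Y_B$ equals that of $A,B$. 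Consequently $C$ is an $(\Lambda\times T,w,\la_a,\la_c)$ $2$-D OOC of size $M$ if and only if $C' = \{\,Y_X : X\in C\,\}$ is an $(\Lambda_1\times\cdots\times\Lambda_{s-1}\times T,w,\la_a,\la_c)$ $s$-D OOC of size $M$: the ``only if'' direction uses the map, the ``if'' direction uses its inverse, which reshapes an $s$-D word back to a $2$-D word by the same mixed-radix correspondence.

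I expect no genuine obstacle here; the only point demanding care is notational, namely keeping the mixed-radix multi-index bookkeeping straight and observing that distinct $2$-D words map to distinct $s$-D words (clear, since $a\leftrightarrow(a_1,\dots,a_{s-1})$ is a bijection and $k$ is untouched). If one prefers to avoid the explicit mixed-radix formula, the same conclusion follows by a one-step induction: apply the displayed $2$-D-to-$3$-D construction with the factorization $\Lambda = \Lambda_1\cdot(\Lambda_2\cdots\Lambda_{s-1})$ to pass from a $2$-D $(\Lambda\times T)$ OOC to a $3$-D $(\Lambda_1\times(\Lambda_2\cdots\Lambda_{s-1})\times T)$ OOC of the same size, then invoke the inductive hypothesis on the $((\Lambda_2\cdots\Lambda_{s-1})\times T)$-shaped part in the last two coordinates, and run the same steps backwards for the converse. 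Either way the capacity is preserved at every stage, which is exactly the claim of the lemma.
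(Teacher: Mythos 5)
Your proof is correct and follows essentially the same route as the paper, which proves the lemma by iterating the displayed $2$-D-to-$3$-D reshaping construction (the paper simply says ``inductively we arrive at the following''); your mixed-radix bijection is just the explicit closed form of that iteration, with the same observations that weight, size, and all correlation sums are preserved because only the spreading indices are relabelled while the time coordinate is untouched.
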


An $ n $-D OOC meeting any of the Johnson-type bounds established in the previous sections is referred to as a \textit{J-optimal} code. With reference to Lemma \ref{lem: onlyTmatters_capacity} along with Corollary \ref{cor: onlyTmatters} we observe that a higher dimensional OOC with time spreading length $ T $ obtained from a  J-optimal lower dimensional OOC  by factoring the $ \Lambda_i $'s  will always be J-optimal. For example, each of the optimal codes in Table \ref{table:1} give rise to optimal codes of dimension $ 4 $ or more.

 \begin{table}[H]
 \label{table:1}
 \centering
 	\caption{ \label{table:1}J-optimal ideal $ (\Lambda_1\times\Lambda_2\times T) $ 3D OOCs. Unless stated otherwise, $ \la_c=1 $.}
 	$p$ a prime, $q $ a prime power, $ \theta(k,q)=\frac{q^{k+1}-1}{q-1} $
 	\[
 	\begin{array}{|p{7cm}|l|c|c|}
 	\hline
 	\textrm{ Conditions } & \textrm{ Type } & \textrm{ Capacity }   & \textrm{ Reference}\\
 	\hline 
 	$ w=\Lambda_1\le p $ for all $ p $ dividing $ \Lambda_2 T $  &    SPS(1)   &  \Lambda^2T   & \cite{Kim2000} \\
 	\hline
 	$ w=q+1=\Lambda_1,$ $  \Lambda_2 = q>3 $, $ T=p>q $&  SPS(1) &  \Lambda^2T    &   \cite{Li2012}  \\
 	\hline
 	$w=4=\Lambda_1\le\Lambda_2 = q $, $ T\ge 2 $&  SPS(1) &  \Lambda^2T     &   \cite{Li2012}  \\
 	\hline
 	$ w=3=\Lambda_1 $, 	$ \Lambda_2 $ and $T $ have the same parity  & SPS(1) &  \Lambda^2T    & \cite{MR3320355}\\
 	\hline
 	$ w=3 $,  $ \Lambda T(S-1) $ even,  $ \Lambda T(S-1)S\equiv 0 $   mod  $  3$, and   $ S\equiv 0,1  $ mod $ 4 $ if $ T\equiv 2 $ mod $ 4 $  and $ \Lambda $ is odd.   &   \multirow{2}{*}{AMOPS(1)} &    \multirow{2}{*}{ $ \ds \frac{\Lambda^2 T (S^2-S) }{ 6 } $ }  &  \multirow{2}{*}{\cite{MR3320355}}\\
 	\hline
 	&&&\\[-1.1em]  
 		$ w=\Lambda_1\Lambda_2\le p $ for all $ p $ dividing $ T $  &    \textrm{Ideal}    &  \Lambda^2T   & \cite{Kim2000} \\
 		\hline
 	$ w=q, $	$  \Lambda S T= q^k-1, T= q-1 $ & \textrm{Ideal} &  \left\lfloor
 		\frac{\Lambda S}{q} \left\lfloor \frac{ T(\Lambda S-1)}{q-1}%
 \right\rfloor\right.  &   \cite{Alderson2017} \\ [1em]
 	\hline
 	$ w=q^2, $	$  \Lambda S = q^2+1, T= q+1, $ $\la_c=q-1 $  & \textrm{Ideal}  &  \Lambda S   &   \cite{Alderson2017} \\
 	\hline
 	\end{array}
 	\]
 \end{table}

On the other hand, a  J-optimal $ n $-D OOC may correspond to an $ s $-D  OOC with $ s<n $ that is strictly asymptotically optimal. For example, from the bound (\ref{eqn:JBamopsj}), we see that a J-optimal $ (5\times 5\times 5,5,0,1) $-AMOPS(1) OOC has capacity $ 125 $, whereas a  J-optimal $ (25 \times 5,5,0,1) $-AMOPS(1) OOC has capacity $ 150 $.

\begin{cor}
Let $ \Lambda=\Lambda_1\cdot\Lambda_2\cdots\Lambda_{n-1} $ be a positive integral factorisation.
\begin{enumerate}
\item If there exists a (resp. asymptotically) J-optimal $ (\Lambda\times T, w, \la_a,\la_c) $ 2-D OOC then there exists a (resp. asymptotically) J-optimal $ (\Lambda_1\times\Lambda_2\cdots \times \Lambda_{n-1} \times T, w, \la_a,\la_c) $ $ n $-D OOC.

\item If there exists a (resp. asymptotically) J-optimal $ (\Lambda_1\times\Lambda_2\cdots \times \Lambda_{n-1} \times T, w, \la_a,\la_c) $ $ n $-D OOC, then there exists a  $ (\Lambda\times T, w, \la_a,\la_c) $ 2-D OOC  which is at least asymptotically J-optimal. 
\end{enumerate} 
\end{cor}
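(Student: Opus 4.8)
The plan is to reduce everything to the capacity‑preserving reshaping of Lemma~\ref{lem: onlyTmatters_capacity}, together with the observation that the Johnson‑type bounds of Theorems~\ref{thm: ddjb2}, \ref{thm:ideal3djb} and~\ref{thm:amopsjb} see the array shape only through $N$, $T$, $w$, the correlation parameter, and---for AMOPS codes---the product $M$ of the lengths of the pinned directions.

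First I would prove part~(1). Starting from a J-optimal $(\Lambda\times T,w,\la_a,\la_c)$ 2-D OOC $C$ with $\Lambda=\Lambda_1\cdots\Lambda_{n-1}$, Lemma~\ref{lem: onlyTmatters_capacity} yields an $(\Lambda_1\times\cdots\times\Lambda_{n-1}\times T,w,\la_a,\la_c)$ $n$-D OOC $C'$ with $|C'|=|C|$. The reshaping only regroups the non-time coordinates, so it fixes $N,T,w,\la_a,\la_c$; an ideal code stays ideal, and an AMOPW code turns into an AMOPS$(1,2,\dots,n-1)$ code, since ``at most one pulse per wavelength'' is literally ``at most one pulse per section of degree $n-1$'', and the corresponding $M$ is $\Lambda_1\cdots\Lambda_{n-1}=\Lambda$, unchanged. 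Hence whichever Johnson-type bound governs $C$---the general one (numerically the same for the two shapes by Corollary~\ref{cor: onlyTmatters}), the ideal one, or the AMOPS one---the identical bound governs $C'$; since $|C'|=|C|$ meets that bound, $C'$ is J-optimal. For the asymptotic version the ratio $|C|/J(C)$ is untouched by the reshaping, and $N\to\infty$ along the 2-D family forces $N\to\infty$ along the derived $n$-D family, so $|C'|/J(C')\to 1$.

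Next I would run the construction in reverse for part~(2): a (resp. asymptotically) J-optimal $(\Lambda_1\times\cdots\times\Lambda_{n-1}\times T,w,\la_a,\la_c)$ $n$-D OOC $C'$ collapses, via Lemma~\ref{lem: onlyTmatters_capacity}, to a $(\Lambda\times T,w,\la_a,\la_c)$ 2-D OOC $C$ with $|C|=|C'|$. If $C'$ meets (resp. asymptotically meets) the general or the ideal $n$-D bound, the same matching of bounds shows $C$ meets (resp. asymptotically meets) the corresponding 2-D bound, so $C$ is already (asymptotically) J-optimal. The one case where the conclusion genuinely weakens is when $C'$ is J-optimal only against an AMOPS$(i_1,\dots,i_j)$ bound with $j<n-1$ and $M=\Lambda_{i_1}\cdots\Lambda_{i_j}<\Lambda$: every wavelength of $C$ sits inside a degree-$j$ section of $C'$, so $C$ is AMOPW, but the AMOPW bound now applicable to $C$ carries the larger parameter $\Lambda$ and therefore dominates $|C|=J(\mathrm{AMOPS})$, in general strictly. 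A term-by-term comparison of the two nested expressions shows their ratio is, modulo the floor roundings, $\prod_{k=1}^{\la}\frac{1-k/M}{1-k/\Lambda}$, which tends to $1$ as the pinned lengths grow along the family; combined with $|C|=|C'|$ being (asymptotically) the $n$-D bound, this gives that $C$ is at least asymptotically J-optimal.

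I expect the delicate point to be precisely this last estimate in part~(2): controlling the discrepancy between the AMOPS Johnson bound with parameter $M$ and with parameter $\Lambda$, and checking that the nested floor functions do not spoil the limit. I would handle it by an argument of the type used for Lemma~\ref{lem: bound relationship}, sandwiching each nested quantity between its floor-free value and that value minus a negligible additive error. The $(5\times5\times5,5,0,1)$ versus $(25\times5,5,0,1)$ example quoted just before the corollary shows that this gap is real, which is why part~(2) cannot in general assert plain J-optimality.
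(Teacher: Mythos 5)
Your part (1) is essentially the paper's own argument: the corollary is presented there without a formal proof, as an observation following from the reshaping Lemma~\ref{lem: onlyTmatters_capacity} (capacity and the parameters $w,\la_a,\la_c$ are preserved) together with Corollary~\ref{cor: onlyTmatters} and the fact that the ideal bound of Theorem~\ref{thm:ideal3djb} and the full-degree AMOPS bound of Theorem~\ref{thm:amopsjb} see the shape only through $N$, $T$, $w$, $\la$ and $M=\Lambda$. Your treatment of (1), including the AMOPW $\to$ AMOPS$(1,\ldots,n-1)$ matching, is the same route and is fine.

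Part (2) is where you go beyond the paper, and where there is a genuine gap. The paper supports the weakened conclusion only by the same reshaping plus the $(5\times5\times5)$ versus $(25\times5)$ example; it does not attempt your quantitative comparison. Your key step is that the ratio of the AMOPS bound with parameter $M=\Lambda_{i_1}\cdots\Lambda_{i_j}$ to the AMOPW bound with parameter $\Lambda$ is, up to floors, $\prod_{k=1}^{\la}\frac{1-k/M}{1-k/\Lambda}$, and that this tends to $1$ ``as the pinned lengths grow along the family.'' Nothing in the hypotheses forces the pinned lengths to grow: $N\to\infty$ is compatible with $M$ bounded. Concretely, take a family of J-optimal SPS$(1)$ codes with $w=\Lambda_1=M$ fixed and the other dimensions growing (such codes appear in Table~\ref{table:1}, e.g.\ \cite{Kim2000} with $\Lambda_1=w=5$). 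The $n$-D capacity equals the extremal AMOPS$(1)$ bound $\frac{N^{\la+1}}{TM^{\la+1}}$, while the flattened $(\Lambda\times T)$ code, with $\Lambda=MS$ and $N=MST$, faces the AMOPW/ideal bound $\approx\frac{ST(MS-1)}{M-1}$; the capacity-to-bound ratio tends to $1-\frac1M=\frac45$, not to $1$. So the limit you need fails in exactly the regime you flagged as delicate, and with it your proof of (2). In fact this shows that, read with the paper's broad definition of J-optimal (meeting \emph{any} of the Johnson-type bounds, including the shape-dependent AMOPS bounds), part (2) itself requires an additional hypothesis such as $M\to\infty$ along the family; alternatively, if ``J-optimal'' is restricted to the shape-invariant bounds of Theorems~\ref{thm: ddjb2} and~\ref{thm:ideal3djb}, then exact J-optimality transfers and no asymptotic weakening is needed. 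Your write-up should either add such a hypothesis or note explicitly that this case is not covered, rather than asserting the limit.
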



\begin{thm}\label{thm: nD-nD}
Let $C$ be an $(\Lambda_1\times\Lambda_2\times \cdots \times\Lambda_{n-1} \times T,w,\la_a,\la_c)$ $ n $-D OOC, $ n\ge 1 $. For any positive integral factorization $ T=T_1\cdot T_2 $, there exists an $(T_1\Lambda_1\times\Lambda_2\times \cdots \times \Lambda_{n-1} \times T,w,\la_a',\la_c')$ $ n $-D OOC, $C'$ with  $\la_a'\le\la_a $, $ \la_c'\le max\{\la_a,\la_c\}$, and $|C'|= T_1\cdot |C|$.
\end{thm}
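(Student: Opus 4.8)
The plan is to obtain $C'$ by \emph{unfolding} the factor $T_1$ of the time length into the first spatial axis: from each codeword $A\in C$ we build $T_1$ codewords of $C'$, indexed by $r\in\{0,1,\dots,T_1-1\}$, by distributing the pulses of $A$ among $T_1$ ``blocks'' of the enlarged first axis according to their time coordinate. Concretely, index the arrays of shape $T_1\Lambda_1\times\Lambda_2\times\cdots\times\Lambda_{n-1}\times T$ by tuples $(i_1,i_2,\dots,i_{n-1},i_n)$ with $0\le i_1<T_1\Lambda_1$, $0\le i_j<\Lambda_j$ for $2\le j\le n-1$, and $0\le i_n<T$, and write $i_1=\alpha\Lambda_1+\beta$ for the unique $\alpha\in\{0,\dots,T_1-1\}$, $\beta\in\{0,\dots,\Lambda_1-1\}$. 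For $A=(a_{i_1,\dots,i_n})\in C$ and $0\le r<T_1$ define $A'_r=(a'_{i_1,\dots,i_n})$ by
\[
a'_{\alpha\Lambda_1+\beta,\,i_2,\dots,i_{n-1},\,i_n}=
\begin{cases}
a_{\beta,\,i_2,\dots,i_{n-1},\,i_n} & \text{if }\alpha\equiv i_n+r\pmod{T_1},\\
0 & \text{otherwise,}
\end{cases}
\]
and set $C'=\{A'_r: A\in C,\ 0\le r<T_1\}$ (for $n=1$ interpret the first factor as $T_1$, i.e.\ $C'$ is a $T_1\times T$ two-dimensional code).

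The verification then reduces to four short bookkeeping steps. \emph{(i) Weight.} Each pulse of $A$ at $(\beta,i_2,\dots,i_n)$ lands in the single block $\alpha\equiv i_n+r\pmod{T_1}$, so $A'_r$ again has weight $w$. \emph{(ii) Cardinality.} For fixed $r$ the assignment $A\mapsto A'_r$ merely relabels the support, hence is injective; and if $r\neq s$, a pulse of $A'_r$ with time coordinate $i_n$ occupies the block $i_n+r$, which carries no pulse of $B'_s$ (those occupy block $i_n+s$), so $A'_r\neq B'_s$. Thus $|C'|=T_1|C|$, and the cross-correlation property is only ever invoked on genuinely distinct codewords. \emph{(iii) Autocorrelation.} In the correlation of $A'_r$ with its shift by $\tau$, $1\le\tau\le T-1$, a surviving term needs both $\alpha\equiv i_n+r$ and $\alpha\equiv i_n+\tau+r\pmod{T_1}$, forcing $T_1\mid\tau$; if $T_1\nmid\tau$ the value is $0$, and if $T_1\mid\tau$ the surviving terms biject with those computing the autocorrelation of $A$ at $\tau$, which is $\le\la_a$. \emph{(iv) Cross-correlation.} For distinct $A'_r,B'_s$ and any $\tau$, a surviving term needs $\alpha\equiv i_n+r$ and $\alpha\equiv i_n+\tau+s\pmod{T_1}$, i.e.\ $\tau\equiv r-s\pmod{T_1}$; for such $\tau$ the correlation equals the cross-correlation of $A$ and $B$ at $\tau$ when $A\neq B$ (hence $\le\la_c$), while when $A=B$ we must have $r\neq s$, so $\tau\equiv r-s\not\equiv0\pmod{T_1}$ forces $\tau\not\equiv0\pmod T$ and the correlation equals the autocorrelation of $A$ at $\tau$, hence $\le\la_a$. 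Therefore $\la_a'\le\la_a$ and $\la_c'\le\max\{\la_a,\la_c\}$.

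The substance — and the only place an error could creep in — is the modular bookkeeping in (iii) and (iv): pinning down how the residue of the shift $\tau$ modulo $T_1$ is constrained by the block offsets of contributing pulses. The conceptual point, which also explains why $\max\{\la_a,\la_c\}$ rather than $\la_c$ appears, is that two distinct codewords of $C'$ descending from the \emph{same} $A$ can overlap only at shifts $\tau\not\equiv0\pmod{T_1}$, where the overlap is measured by the \emph{auto}correlation of $A$; for all other pairs the relevant bound is the original $\la_c$. (A cruder variant — placing $T_1$ unshifted copies of $C$ in the $T_1$ separate blocks — would even yield $\la_c'\le\la_c$, but the resulting $C'$ is ``degenerate'', so the construction above is preferable for the iterative optimality arguments of this section.)
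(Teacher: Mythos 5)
Your bookkeeping is internally sound, and it does verify the statement exactly as printed; but the printed parameter list contains a typo that your own closing remark already exposes. In the theorem the paper actually proves and uses, the new code has time-spreading length $T_2$, not $T$: the factor $T_1$ is moved \emph{out of} the time axis and \emph{into} the first spatial axis, so the total array size $N=\Lambda_1\Lambda_2\cdots\Lambda_{n-1}T$ is preserved while the number of codewords grows by $T_1$. This is why the factorization $T=T_1T_2$ is hypothesized at all (in your reading $T_2$ plays no role), and it is what the paper's proof delivers: it simply quotes the $n=1,2$ cases (Theorems 3 and 5 of \cite{MR2553388}, which are precisely the time-to-space folding results producing a $(T_1\Lambda\times T_2,w,\la_a',\la_c')$ code of size $T_1|C|$) and then passes between $2$-D and $n$-D codes with Lemma~\ref{lem: onlyTmatters_capacity}. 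With the time length kept at $T$, as in your construction, the claim is nearly vacuous --- as you note yourself, placing $T_1$ disjoint copies of $C$ block-diagonally already gives $T_1|C|$ codewords with $\la_c'\le\la_c$ --- and, decisively, it cannot support the corollaries this theorem exists for: Corollaries~\ref{cor: 1D optimal to nD optimal} and~\ref{cor: expand nD codes} need $N$ fixed and the time length reduced to $T_2$, so that by Lemma~\ref{lem: bound relationship} and Corollary~\ref{cor: bound relationship_n>1} the Johnson bound scales by exactly the factor $T_1$ and $T_1|C|$ remains J-optimal; in your version the ambient size becomes $T_1N$, the bound of Theorem~\ref{thm: ddjb2} grows far faster than $T_1$, and all optimality is lost.

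The missing idea, concretely, is the compression of the time axis. View $C$ (via Lemma~\ref{lem: onlyTmatters_capacity}) as a $(\Lambda\times T)$ $2$-D code with $\Lambda=\Lambda_1\cdots\Lambda_{n-1}$, write each time index as $t=s+T_1 j$ with $0\le s<T_1$, $0\le j<T_2$, and fold a codeword by sending a pulse with time coordinate $t$ to slab $s$ of the enlarged spatial axis and new time coordinate $j$; the $T_1$ new codewords arising from $A$ are the foldings of the translates $\phi^{u}A$, $0\le u<T_1$, where $\phi$ is the time shift. A cyclic shift of the folded array by $\tau\in\{0,\dots,T_2-1\}$ then corresponds to the original shift by $T_1\tau$, so two new codewords coming from the same $A$ correlate exactly as $A$ does with a nonzero cyclic shift of itself (whence $\la_c'\le\max\{\la_a,\la_c\}$), distinct originals give $\le\la_c$, and $\la_a'\le\la_a$ is inherited. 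That analysis --- which your construction never has to confront because it never shortens the time axis --- is the actual content of the theorem; alternatively one can, as the paper does, simply cite \cite{MR2553388} and reduce the general case with Lemma~\ref{lem: onlyTmatters_capacity}.
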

\begin{proof}
For $ n=1,2 $ see Theorems 3 and 5 in \cite{MR2553388}. The result then follows from Lemma \ref{lem: onlyTmatters_capacity}. 
\end{proof}

There are many constructions of optimal 1-dimensional OOCs. From the Theorem \ref{thm: nD-nD} we see that in some cases optimal 1-dimensional OOCs give optimal $ n $-D OOCs.  

\begin{cor}\label{cor: 1D optimal to nD optimal}
Let $C$ be an $(N,w,\la)$ OOC with $N=\Lambda_1\cdot \Lambda_2\cdots \Lambda_{n-1}\cdot T$.
\begin{enumerate}
\item   If $C$ is J-optimal and $f(N,w,\la)-J(N,w,\la)<\frac{T}{N}$, then a J-optimal $((\Lambda_1\times \Lambda_2\times\cdots \times \Lambda_{n-1}\times T, w, \la)$ $ n $-D OOC exists.
\item   If $C$ is a member of a  J-optimal (or asymptotically J-optimal) family then a family of $(\Lambda_1\times \Lambda_2\times\cdots \times \Lambda_{n-1}\times T, w, \la)$ $ n $-D OOCs exists which is (at least) asymptotically optimal.
\end{enumerate}
\end{cor}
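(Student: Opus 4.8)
The plan is to derive both parts directly from Theorem \ref{thm: nD-nD}, Lemma \ref{lem: onlyTmatters_capacity}, and Lemma \ref{lem: bound relationship}, since the heavy lifting has already been done there. First I would observe that a 1-D $(N,w,\la)$ OOC is, by definition, an $n$-D OOC with all spreading lengths equal to $1$ and time spreading length $N$; so by applying Theorem \ref{thm: nD-nD} iteratively with the factorization $N = \Lambda_1 \cdot (\Lambda_2 \cdots \Lambda_{n-1} \cdot T)$, then peeling off $\Lambda_2$, and so on, I obtain an $(\Lambda_1 \times \Lambda_2 \times \cdots \times \Lambda_{n-1} \times T, w, \la)$ $n$-D OOC $C'$ with $|C'| = (\Lambda_1 \Lambda_2 \cdots \Lambda_{n-1}) \cdot |C| = \frac{N}{T}\,|C|$. (Strictly speaking Theorem \ref{thm: nD-nD} moves the factor onto the first spreading coordinate each time; Lemma \ref{lem: onlyTmatters_capacity} lets me rearrange so the bookkeeping matches the stated target dimensions, and since $\la_a' \le \la_a$, $\la_c' \le \max\{\la_a,\la_c\}=\la$, the correlation parameters are still bounded by $\la$.)

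For part (1): assuming $C$ is J-optimal, $|C| = J(N,w,\la)$, hence $|C'| = \frac{N}{T} J(N,w,\la)$. The hypothesis $f(N,w,\la) - J(N,w,\la) < \frac{T}{N}$ is exactly the condition appearing in Lemma \ref{lem: bound relationship}, which then gives $\frac{N}{T} J(N,w,\la) = J(\Lambda_1 \times \cdots \times \Lambda_{n-1} \times T, w, \la)$. Therefore $|C'|$ meets the Johnson bound of Theorem \ref{thm: ddjb2} for $n$-D OOCs with these parameters, i.e. $C'$ is J-optimal. This is the whole argument for (1).

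For part (2): let $(C_N)$ be a J-optimal (or asymptotically J-optimal) family of 1-D $(N,w,\la)$ OOCs, so $\lim_{N\to\infty} |C_N|/J(N,w,\la) = 1$. For each $N$ admitting the factorization $N = \Lambda_1 \cdots \Lambda_{n-1} \cdot T$, build $C'_N$ as above with $|C'_N| = \frac{N}{T}|C_N|$. By the upper inequality in Lemma \ref{lem: bound relationship}, $J(\Lambda_1\times\cdots\times\Lambda_{n-1}\times T,w,\la) \le \frac{N}{T} J(N,w,\la) + \frac{N}{T} - 1$, while the lower inequality gives $\ge \frac{N}{T} J(N,w,\la)$. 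Hence
\[
\frac{|C'_N|}{J(\Lambda_1\times\cdots\times T,w,\la)} \;\ge\; \frac{\frac{N}{T}|C_N|}{\frac{N}{T}J(N,w,\la) + \frac{N}{T} - 1} \;=\; \frac{|C_N|/J(N,w,\la)}{1 + \bigl(1 - \frac{T}{N}\bigr)/J(N,w,\la)}.
\]
As $N \to \infty$ one has $J(N,w,\la) \to \infty$ (it grows like $f(N,w,\la) = \Theta(N^{\la+1}/(w\cdots(w-\la)))$), so the denominator tends to $1$ and, combined with $|C_N|/J(N,w,\la) \to 1$ and the trivial upper bound $|C'_N| \le J(\Lambda_1\times\cdots\times T,w,\la)$, the ratio tends to $1$; the family $(C'_N)$ is asymptotically optimal in the sense of Definition \ref{asym}.

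The only real subtlety — the ``main obstacle'' such as it is — is checking that the iterated application of Theorem \ref{thm: nD-nD} genuinely produces a code with the claimed shape and that the correlation parameters stay $\le \la$ at every step (they do, since $\la_a' \le \la_a \le \la$ and $\la_c' \le \max\{\la_a,\la_c\} = \la$, so the bound $\max\{\cdot\}$ never grows past $\la$), together with invoking Lemma \ref{lem: onlyTmatters_capacity} to reconcile the order in which spreading coordinates are introduced. Everything else is a direct substitution into Lemma \ref{lem: bound relationship} and the definition of (asymptotic) optimality.
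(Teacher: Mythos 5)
Your proposal is correct and uses essentially the same ingredients as the paper's (one-line) proof: Theorem \ref{thm: nD-nD} applied to the 1-D code to inflate the capacity by $N/T$, Lemma \ref{lem: onlyTmatters_capacity} to distribute the spreading factor across the $n-1$ coordinates, and Lemma \ref{lem: bound relationship} together with Theorem \ref{thm: ddjb2} to compare with the $n$-D Johnson bound (exactly, under the hypothesis $f-J<T/N$, and asymptotically in part (2)). The only cosmetic difference is that you peel off the factors $\Lambda_i$ iteratively rather than applying Theorem \ref{thm: nD-nD} once with $n=1$ and $T_1=N/T$; the substance and the limit computation match the paper's intended argument.
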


\begin{proof}
Follows from Theorem \ref{thm: nD-nD}, (taking $ n=1 $), Lemma \ref{lem: bound relationship}, and the bounds in Theorem \ref{thm: ddjb2}.
\end{proof}

In  \cite{MR1022081}, by considering orbits of lines in finite projective spaces, it is shown that for any prime power $ q , $ an infinite family of J-optimal $ (\frac{q^{k+1-1}}{q-1}, q+1,1) $ OOCs exits. From Corollary \label{cor: 1D optimal to nD optimal} we now see that for any factorisation $ \frac{q^{k+1-1}}{q-1}=\Lambda_1\cdot \Lambda_2\cdots \Lambda_{n-1}\cdot T $, an optimal $ (\Lambda_1\times \Lambda_2\times \cdots \times\Lambda_{n-1}\times T, q+1,1) $ $ n $-D OOC exists.

For dimensions $ n>1 $, we may also construct new optimal codes from others. 

\begin{cor}\label{cor: expand nD codes}  
Let $C$ be an $(\Lambda_1\times\Lambda_2\times \cdots \times\Lambda_{n-1} \times T,w,\la)$ $ n $-D OOC with $T=T_1\cdot T_2$.
\begin{enumerate}
\item   If $C$ is J-optimal and $f(\Lambda_1\times\Lambda_2\times \cdots \times\Lambda_{n-1} \times T,w,\la)-J(\Lambda_1\times\Lambda_2\times \cdots \times\Lambda_{n-1} \times T,w,\la)<\frac{1}{T_1}$ (in particular, if $f(\Lambda_1\times\Lambda_2\times \cdots \times\Lambda_{n-1} \times T,w,\la)$ is integral), then a J-optimal $(T_1\Lambda_1\times\Lambda_2\times \cdots \times\Lambda_{n-1} \times T,w,\la, w, \la)$ $ n $-D OOC exists.
\item   If $C$ is a member of a J-optimal family, or an asymptotically J-optimal family then a family of $(\Lambda_1\times\Lambda_2\times \cdots \times\Lambda_{n-1} \times T,w,\la)$ $ n $-D OOCs exists which is (at least) asymptotically optimal.
\end{enumerate}
\end{cor}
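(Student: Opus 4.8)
The plan is to obtain the required code $C'$ from $C$ by a single application of Theorem~\ref{thm: nD-nD}, and then to compare $|C'|$ against the Johnson bound of Theorem~\ref{thm: ddjb2}, using the bound relationships recorded in Lemma~\ref{lem: bound relationship} and Corollaries~\ref{cor: onlyTmatters} and~\ref{cor: bound relationship_n>1}. Concretely, I would apply Theorem~\ref{thm: nD-nD} to $C$ with the factorization $T=T_1\cdot T_2$ to obtain an $n$-D OOC $C'$ of weight $w$, off-peak autocorrelation at most $\la$ and cross-correlation at most $\la$, of total length $N=\Lambda_1\cdots\Lambda_{n-1}T$, with time-spreading length $T_2$, and with $|C'|=T_1\,|C|$; thus $C'$ is a $(T_1\Lambda_1\times\Lambda_2\times\cdots\times\Lambda_{n-1}\times T_2,\,w,\,\la)$ OOC (which $\Lambda_i$ absorbs the factor $T_1$ being irrelevant in what follows, by Corollary~\ref{cor: onlyTmatters}). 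It then remains only to compare $T_1|C|$ with the relevant Johnson bound.

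For part~(1), J-optimality of $C$ gives $|C|=\Phi(C)=J(\Lambda_1\times\cdots\times\Lambda_{n-1}\times T,w,\la)$. The assumed inequality $f(\Lambda_1\times\cdots\times T,w,\la)-J(\Lambda_1\times\cdots\times T,w,\la)<\tfrac1{T_1}$ is exactly the hypothesis of Corollary~\ref{cor: bound relationship_n>1} with $\Lambda_n:=T_1$ and $T':=T_2$, whence
\[
T_1\cdot J(\Lambda_1\times\cdots\times\Lambda_{n-1}\times T,w,\la)=J(T_1\Lambda_1\times\Lambda_2\times\cdots\times\Lambda_{n-1}\times T_2,w,\la);
\]
at bottom this is the elementary fact that if $x=\tfrac{N}{T}f(N,w,\la)$ satisfies $x-\lfloor x\rfloor<\tfrac1{T_1}$ then $T_1\lfloor x\rfloor=\lfloor T_1 x\rfloor$. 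Hence $|C'|=T_1|C|$ attains the upper bound of Theorem~\ref{thm: ddjb2} for a $(T_1\Lambda_1\times\cdots\times T_2,w,\la)$ OOC, and since $|C'|\le\Phi(C')$ trivially, $C'$ is J-optimal. The parenthetical case is immediate, $f$ integral giving $f-J=0<\tfrac1{T_1}$.

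For part~(2), I would run the same construction on each member of the family (for each member, after fixing a factorization $T=T_1T_2$ of its time-spreading length), obtaining $C'$ with $|C'|=T_1|C|$, total length $N$, and time-spreading length $T_2$. By Theorem~\ref{thm: ddjb2}, $\Phi(C')\le J(T_1\Lambda_1\times\cdots\times T_2,w,\la)$, and by Lemma~\ref{lem: bound relationship} (applied to $N$ with time-spreading length $T_2$) this is at most $\tfrac{N}{T_2}J(N,w,\la)+\tfrac{N}{T_2}-1$; meanwhile the same lemma (applied to $N$ with time-spreading length $T$), together with the (asymptotic) J-optimality of the family, gives $|C|\ge(1-o(1))\,\tfrac{N}{T}J(N,w,\la)$. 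Therefore
\[
1\ \ge\ \frac{|C'|}{\Phi(C')}\ \ge\ \frac{T_1(1-o(1))\tfrac{N}{T}J(N,w,\la)}{\tfrac{N}{T_2}\bigl(J(N,w,\la)+1\bigr)}\ =\ (1-o(1))\,\frac{J(N,w,\la)}{J(N,w,\la)+1}\ \longrightarrow\ 1
\]
as the length grows (with $w,\la$ bounded, so that $J(N,w,\la)\to\infty$), and hence the family $\{C'\}$ is (at least) asymptotically optimal.

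The only genuine content is the integer-part identity $T_1\lfloor x\rfloor=\lfloor T_1 x\rfloor$ underpinning part~(1) --- precisely what the hypothesis $f-J<\tfrac1{T_1}$ is designed to supply, and already isolated in Corollary~\ref{cor: bound relationship_n>1} --- and, in part~(2), the harmless observation that the additive slack $\tfrac{N}{T_2}-1$ of Lemma~\ref{lem: bound relationship} is of lower order than $\Phi(C')\asymp\tfrac{N}{T_2}J(N,w,\la)$. So once Theorem~\ref{thm: nD-nD} is invoked the argument is essentially bookkeeping with the nested floors; I do not anticipate a substantive obstacle.
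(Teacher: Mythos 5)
Your proposal is correct and follows essentially the same route as the paper, whose proof is just the one-line citation of Theorem~\ref{thm: nD-nD}, Corollary~\ref{cor: bound relationship_n>1} (with $\Lambda_n=T_1$, $T'=T_2$), and the bounds of Theorem~\ref{thm: ddjb2}; you simply make the bookkeeping explicit, including the correct reading that the expanded code has time-spreading length $T_2$ (the ``$\times T$'' in the theorem statement is a typo). Your added remarks --- the floor identity $T_1\lfloor x\rfloor=\lfloor T_1x\rfloor$ under the hypothesis $f-J<\tfrac1{T_1}$, and the need for $J(N,w,\la)\to\infty$ so the additive slack of Lemma~\ref{lem: bound relationship} is lower order in part~(2) --- are consistent with what the paper leaves implicit.
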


\begin{proof}
Follows from Theorem \ref{thm: nD-nD}, Corollary \ref{cor: bound relationship_n>1}, and the bounds in Theorem \ref{thm: ddjb2}.
\end{proof}

\section{New optimal and asymptotically optimal codes} \label{sec: New Constructions}

\subsection{Preliminaries}
Our techniques will rely heavily on the properties of finite projective and affine spaces. Such techniques have been used successfuly in the construction of infinite families of optimal OOCs of one dimension, \cite{MR1022081,MR2359316,MR2067605,MR2354005,AldMellHyperovals}, two dimensions \cite{Alderson20111187,MR2553388}, and three dimensions \cite{AldersonWSEAS2017}, \cite{Alderson2017}. We start with a brief
overview of the necessary concepts. By $PG(k,q)$ we denote the classical (or Desarguesian) finite projective geometry of dimension $k$ and order $q$. $ PG(k,q) $   may be modeled with the affine (vector) space  $AG(k+1,q)$ of
dimension $k+1$ over the finite field $GF(q)$. Under this model, points of $PG(k,q)$ correspond to 1-dimensional subspaces
of $AG(k,q)$, projective lines correspond to 2-dimensional affine subspaces, and so on.  A \textit{$d$-flat} $\Pi$ in $PG(k,q)$ is a
subspace isomorphic to $PG(d,q)$; if $d=k-1$, the subspace $\Pi$ is called a \textit{hyperplane}.
Elementary counting  shows that the number of $d$-flats in $PG(k,q)$ is given by the Gaussian coefficient
\begin{equation} \label{eqn: number of projective d-flats} \left[
\begin{array}{c}
k+1 \\
d+1 \\
\end{array}
\right]_q = \frac{(q^{k+1}-1)(q^{k+1}-q)\cdots(q^{k+1}-q^{d})}{(q^{d+1}-1)(q^{d+1}-q)\cdots(q^{d+1}-q^{d})}.
\end{equation}

In particular, the number of points of $PG(k,q)$ \textbf{is given by
$\theta(k,q)=\frac{q^{k+1}-1}{q-1}$.  We will use $\theta(k)$  to represent} this number when $ q $ is understood to be the order of the field. Further, we shall denote by $ \LL(k) $ the number of lines in $ PG(k,q) $
.  \textbf{For a point set $A$ in $PG(k,q)$ we shall denote by $\langle A \rangle$ the span of
$A$, so $\langle A \rangle=PG(t,q)$ for some $t\leq k$.}

A \emph{Singer group} of $PG(k,q)$ is a cyclic group of automorphisms acting sharply transitively on the points.  The generator of such a group is known as a \emph{Singer cycle}. Singer groups are known to exist in classical projective spaces of any order and dimension and their existence follows from that of primitive elements in a finite field.

Here, we make use of a  Singer group that is most easily understood by modelling a finite projective space using a finite field. If we let $\beta$ be a primitive element
of $GF(q^{k+1})$, the points of $\Sigma=PG(k,q)$ can be represented by the field elements
$\beta^0=1,\beta,\beta^2,\ldots,\beta^{n-1}$, where $n=\theta(k)$. 

The non-zero elements of $GF(q^{k+1})$ form a cyclic group under multiplication.
Multiplication by $\beta$ induces an automorphism, or
collineation, on the associated projective space $PG(k,q)$ (see e.g. \cite{MR0249317}). Denote by $\phi$ the collineation of
$\Sigma$ defined by $\beta^i \mapsto \beta^{i+1}$. The map $\phi$ clearly acts sharply transitively on the points of $\Sigma$.

As observed in \cite{MR2553388}, we can construct 2-dimensional codewords by considering orbits under some subgroup of $G$.
Let $n=\theta(k)=\Lambda\cdot T$ where $G$ is the Singer group of $\Sigma=PG(k,q)$. Since $G$ is cyclic there exists an unique subgroup $H$ of order $T$ ($H$ is the subgroup with generator $\phi^\Lambda$).

\begin{defn} \label{defn: Incidence matrices}
	Let $\Lambda, T$ be integers such that $n=\theta(k)=\Lambda\cdot T$.  For an arbitrary pointset $S$ in $\Sigma=PG(k,q)$ we define the $\Lambda\times T$ incidence matrix  $A=(a_{i,j})$, $0\le i\le \Lambda-1$, $ 0\le j \le T-1 $ where $a_{i,j}=1$ if and only if the point corresponding to $\beta^{i+\Lambda j}$ is in $S$.
\end{defn}

If $S$ is a pointset of $\Sigma$  with corresponding $\Lambda\times T$ incidence matrix $W$ of weight $w$, then $\phi^\Lambda$ induces a cyclic
shift on the columns of $W$.  For any such set $S$, consider its orbit $Orb_H(S)$ under the group
$H$ generated by $\phi^\Lambda$.  The set $S$ has \textit{full $H$-orbit} if $|Orb_H(S)|=T=\frac{n}{\Lambda}
$ and \textit{short $H$-orbit} otherwise. If $S$ has full $H$-orbit then a representative member of the orbit and corresponding 2-dimensional codeword is chosen. The collection of all such codewords gives rise to a $(\Lambda\times T,w,\la_a,\la_c)$ 2-D OOC, where $\la_a$ is determined by

\[
\max_{1\leq i<j \leq \;T} \left\{ |\phi^{\Lambda\cdot i}(S)\cap\phi^{\Lambda\cdot j}(S)| \right\}
\]
and $\la_c$ is determined by
\[
\max_{1\leq i,j \leq \;T} \left\{ |\phi^{\Lambda \cdot i}(S) \cap \phi^{\Lambda\cdot j} (S')| \right\}.
\]

\subsection{Construction}

Let $\Sigma=PG(k,q)$ where $G=\langle \phi \rangle $ is the Singer group of $\Sigma$ as in the previous section.  Our work will rely on the following results about orbits of flats.

\begin{thm}[Rao \cite{MR0249317}, Drudge\cite{MR1912797} ]\label{Rao}
	In $\Sigma=PG(k,q)$, there exists a short $G$-orbit of $d$-flats if and only if $gcd(k+1,d+1)\ne 1$.
	In the case that $ d+1 $ divides $ k+1 $ there is a short orbit $\SSS$ which  partitions the points of $\Sigma$ (i.e. constitutes a $d$-spread of $\Sigma$).  There is precisely one such orbit, and the $G$-stabilizer of any $\Pi\in \SSS$ is $Stab_G(\Pi)=\langle \phi^{\frac{\theta(k)}{\theta(d)}}\rangle$.\\
\end{thm}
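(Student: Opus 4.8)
The plan is to work entirely inside the field model $\Sigma=PG(k,q)$ with points represented by the cyclic group generated by $\phi$, where $\phi$ is multiplication by a primitive element $\beta\in GF(q^{k+1})$, and $\theta(k)=(q^{k+1}-1)/(q-1)$. A $d$-flat $\Pi$ corresponds to the set of $1$-dimensional $GF(q)$-subspaces lying in some $(d+1)$-dimensional $GF(q)$-subspace $V\le GF(q^{k+1})$. The $G$-orbit of $\Pi$ is short precisely when some nontrivial power $\phi^{m}$ (with $m\mid\theta(k)$, $m<\theta(k)$) stabilizes $\Pi$, i.e. when $\beta^{m}V=V$, which says $V$ is closed under multiplication by $\beta^{m}$. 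First I would observe that $\beta^{\theta(k)}$ generates a copy of $GF(q)^{*}$ inside $GF(q^{k+1})^{*}$ (its order is $q-1$), so $V$ is always a $GF(q)$-space; the question is whether $V$ can be a module over the \emph{larger} subfield $GF(q^{e})$ for some $e\mid k+1$, $e>1$. Indeed $\langle\beta^{m},GF(q)\rangle$ is a subfield $GF(q^{e})$ exactly when $m$ is a multiple of $\theta(k)/\theta_{e}$ where $\theta_{e}=(q^{e}-1)/(q-1)$, and then $V$ being a $GF(q^{e})$-subspace forces $e\mid d+1$ and $e\mid k+1$.

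Putting this together gives both directions. For the ``if'' direction, suppose $g:=\gcd(k+1,d+1)\ne 1$ and pick a prime $e\mid g$ (or take $e=g$ directly). Then $GF(q^{k+1})$ contains $GF(q^{e})$, and since $e\mid d+1$ we may choose a $(d+1)/e$-dimensional $GF(q^{e})$-subspace $V$; this $V$ is a $(d+1)$-dimensional $GF(q)$-space, hence a $d$-flat, and it is stabilized by $\phi^{\theta(k)/\theta_{e}}$, a nontrivial power, so its orbit is short. For the ``only if'' direction, if $\Pi$ has a short orbit then its stabilizer contains $\phi^{m}$ for some proper divisor $m$ of $\theta(k)$; the corresponding subspace $V$ is then invariant under the subring generated by $\beta^{m}$ and $GF(q)$, which is a subfield $GF(q^{e})$ with $e>1$, so $V$ is a $GF(q^{e})$-module, forcing $e\mid d+1$, and since $e\mid k+1$ automatically, $\gcd(k+1,d+1)\ge e>1$. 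For the refinement when $d+1\mid k+1$: take $e=d+1$, so $\theta_{e}=\theta(d)$, and observe that $GF(q^{k+1})$ is a $1$-dimensional vector space over $GF(q^{d+1})$ in each of the $\theta(k)/\theta(d)$ ``directions'', i.e. the $d$-flats $GF(q^{d+1})\cdot\beta^{i}$ for $i=0,\dots,\theta(k)/\theta(d)-1$ partition the points; this single orbit is the unique spread-orbit because any $\phi^{m}$-stabilized $d$-flat with $|Stab|=\theta(d)$ must be a $GF(q^{d+1})$-line through the origin, and there is exactly one such orbit. The stabilizer computation $Stab_G(\Pi)=\langle\phi^{\theta(k)/\theta(d)}\rangle$ then follows from the orbit-stabilizer theorem, since the orbit has size $\theta(k)/\theta(d)$.

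I would organize the write-up as: (1) set up the field/subspace correspondence and note $\beta^{\theta(k)}$ has order $q-1$; (2) the key lemma that $\phi^{m}$ stabilizes the $d$-flat $\langle V\rangle$ iff $V$ is a $GF(q^{e})$-subspace for the subfield $e$ determined by $\gcd$-data of $m$ and $\theta(k)$; (3) deduce both directions of the equivalence; (4) handle the spread case and uniqueness; (5) read off the stabilizer via orbit--stabilizer. The main obstacle I expect is step (2): making precise the passage from ``$\phi^{m}\Pi=\Pi$'' to ``the underlying subspace is a module over a genuine subfield,'' in particular pinning down \emph{which} subfield arises from a given $m$ and checking that this subfield index divides both $k+1$ and $d+1$. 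One must be careful that $m$ being a proper divisor of $\theta(k)$ does not by itself give a large subfield — it is the interaction with the factor $q-1$ (coming from scalars already acting trivially on points) that matters, so the cleanest route is to phrase everything in terms of the subgroup of $GF(q^{k+1})^{*}$ generated by $\beta^{m}$ together with $GF(q)^{*}$ and identify when that subgroup is the multiplicative group of a subfield. Since the cited references (Rao, Drudge) already contain the detailed arguments, in the paper itself I would give the construction of the spread orbit explicitly and cite \cite{MR0249317,MR1912797} for the full characterization and uniqueness.
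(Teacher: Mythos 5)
The paper does not prove this theorem at all: it is imported verbatim from Rao and Drudge, so there is no internal argument to compare your proposal against. Your sketch is essentially the standard field-model argument underlying those references, and its main steps are sound: a $d$-flat corresponds to a $(d+1)$-dimensional $GF(q)$-subspace $V\le GF(q^{k+1})$; a nontrivial stabilizing power $\phi^m$ ($0<m<\theta(k)$) makes $V$ a module over the subfield generated by $GF(q)$ and $\beta^m$, forcing a common divisor $e>1$ of $k+1$ and $d+1$; conversely a $GF(q^e)$-subspace with $1<e\mid\gcd(k+1,d+1)$ yields a short orbit; and when $d+1\mid k+1$ the orbit of $GF(q^{d+1})$ is a spread, unique among spread orbits, with stabilizer read off from orbit--stabilizer.

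Two details deserve tightening. First, the clause ``$\langle\beta^{m},GF(q)\rangle$ is a subfield $GF(q^{e})$ exactly when $m$ is a multiple of $\theta(k)/\theta_{e}$'' is not the right formulation: the subring generated by $GF(q)$ and $\beta^m$ is \emph{always} a subfield $GF(q^{e})$ with $e\mid k+1$ (a finite subring of a field containing $1$ is a field); what your argument actually needs, and you do use, is that $e>1$ whenever $0<m<\theta(k)$, which holds because $GF(q)^{*}=\langle\beta^{\theta(k)}\rangle$, so $\beta^{m}\in GF(q)$ would force $\theta(k)\mid m$ --- this is exactly the ``interaction with the factor $q-1$'' you flag as the obstacle, and it resolves cleanly. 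Second, your uniqueness step should be phrased as uniqueness among orbits that partition the points (which is what the statement asserts): if an orbit of $d$-flats is a spread, it has size $\theta(k)/\theta(d)$, so by orbit--stabilizer its stabilizer is the unique subgroup of order $\theta(d)$ in the cyclic group $G$, namely $\langle\phi^{\theta(k)/\theta(d)}\rangle$; invariance of $V$ under multiplication by $\beta^{\theta(k)/\theta(d)}$, an element of multiplicative order $q^{d+1}-1$, makes $V$ a one-dimensional $GF(q^{d+1})$-subspace, i.e.\ some $\beta^{i}GF(q^{d+1})$, hence in the distinguished orbit. (Uniqueness of \emph{short} orbits in general would be false, e.g.\ for $3$-flats in $PG(7,q)$, where $e=2$ gives further short orbits, so this reading matters.) With these repairs your outline is a complete proof; simply citing Rao and Drudge, as the paper does, is of course also legitimate.
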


\subsubsection{Construction 1}
For our first construction we mimic the methods of \cite{Alderson2017}, whereby codewords correspond to lines that are not contained in any element of a $ d $-spread of $ \Sigma $.\\
For $ d\ge 1 $, let $ k>1 $ such that $ d+1 $ divides $ k+1 $.  Let  $G=\langle \phi \rangle$  be the Singer group of $ \Sigma=PG(k,q) $, as detailed above, and let $\SSS$ be the  $ d $-spread determined (as in Theorem \ref{Rao}) by $G$,  where say $Stab_G(\SSS)=H = \left \langle \phi^\Lambda \right \rangle$, where $ \Lambda=\frac{\theta(k)}{\theta(d)} $.\\
Let $ \ell $ be a line not contained in any spread element (a $ d $-flat in $ \SSS $), and let $ A $ be the $ \Lambda \times \theta(d)$ projective incidence array corresponding to $ \ell $. Observe that $ \ell $ has a full $ H $-orbit.  $ H $ acts sharply transitively on the points of each spread element. It follows  that $ A $,  considered as a $ \Lambda\times  \theta(d)$ codeword, satisfies $ \la_a = 0 $. For each such line $ \ell $, we choose a representative element of it's $ H $-orbit, and include its corresponding incidence array as a codeword. The aggregate of these codewords gives an ideal $ (\Lambda\times \theta(d), q+1, 0, 1) $-3D OOC, $  C $.    Elementary counting gives
\begin{align} 
|C| & = \frac{\LL(k)-\LL(d)\cdot \frac{\theta(k)}{\theta(d)}}{\theta(d)} \nonumber  \\  
& = \frac{\theta(k)\theta(k-1)}{\theta(d)(q+1)} - \frac{\theta(d-1)\theta(k)}{\theta(d)(q+1)} \nonumber \\
& = \frac{\theta(k)}{\theta(d)(q+1)}\left[\theta(k-1)-\theta(d-1)\right]. \label{eqn: size d-spread proj}
\end{align}
Comparing (\ref{eqn: size d-spread proj})  with the bound in  Theorem  \ref{thm:ideal3djb} shows these codes to be optimal.

\begin{thm}\label{thm:optimal ideal d-spread projective}
For $ d+1$ a proper divisor of $ k+1 $, there exists a J-optimal $ (\frac{\theta(k)}{\theta(d)} \times \theta(d), q+1, 0, 1) $ 2-D OOC.
\end{thm}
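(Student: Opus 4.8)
The plan is to verify directly that the code $C$ constructed above, whose cardinality is computed in (\ref{eqn: size d-spread proj}), already attains the upper bound of Theorem~\ref{thm:ideal3djb}; no further construction is required.

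First I would pin down the parameters. The code $C$ is an ideal $(\Lambda\times\theta(d),q+1,0,1)$ $2$-D OOC with $\Lambda=\theta(k)/\theta(d)$, so in the notation of Theorem~\ref{thm:ideal3djb} we have $N=\Lambda\cdot\theta(d)=\theta(k)$, $T=\theta(d)$, $w=q+1$ and $\la_c=1$. Because $\la_c=1$, the nested floors in (\ref{eqn: joptimal ideal}) reduce to two levels:
\[
J(Ideal)=\left\lfloor \frac{\theta(k)}{\theta(d)(q+1)}\left\lfloor \frac{\theta(k)-\theta(d)}{q}\right\rfloor\right\rfloor .
\]

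Next I would evaluate the two floor functions using that $d+1$ is a proper divisor of $k+1$, so $1\le d<k$. Writing $\theta(m)=1+q+\cdots+q^{m}$, one has the elementary identities
\[
\theta(k)-\theta(d)=q^{d+1}+\cdots+q^{k}=q^{d+1}\theta(k-d-1),\qquad
\theta(k-1)-\theta(d-1)=q^{d}+\cdots+q^{k-1}=q^{d}\theta(k-d-1).
\]
From the first, $\bigl(\theta(k)-\theta(d)\bigr)/q=q^{d}\theta(k-d-1)$ is an integer, so the inner floor is trivial; substituting the second into (\ref{eqn: size d-spread proj}) gives
\[
|C|=\frac{\theta(k)}{\theta(d)(q+1)}\bigl(\theta(k-1)-\theta(d-1)\bigr)=\frac{\theta(k)}{\theta(d)(q+1)}q^{d}\theta(k-d-1)=\frac{\theta(k)}{\theta(d)(q+1)}\left\lfloor \frac{\theta(k)-\theta(d)}{q}\right\rfloor .
\]
Since $|C|$ is an integer (it counts the $H$-orbits, each of length $\theta(d)$, of the lines contained in no spread element), the outer floor is trivial as well, hence $J(Ideal)=|C|$.

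Combining $|C|\le\Phi(C)$ with the bound $\Phi(C)\le J(Ideal)=|C|$ of Theorem~\ref{thm:ideal3djb} forces $\Phi(C)=|C|=J(Ideal)$, i.e.\ $C$ is J-optimal. I do not anticipate a real obstacle: once the two geometric-series identities are noticed, both floor operations act as the identity and the rest is bookkeeping of the parameters that enter Theorem~\ref{thm:ideal3djb}. The only facts I would lean on without reproving are those already established in the construction — that a line off the spread has a full $H$-orbit of length $\theta(d)$, and that $\la_c=1$ (because distinct codewords come from distinct $H$-orbits of lines and two distinct projective lines meet in at most one point).
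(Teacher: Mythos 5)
Your proposal is correct and takes essentially the same approach as the paper: the paper also proves the theorem by taking the code of Construction~1, whose size is computed in (\ref{eqn: size d-spread proj}), and comparing it with the ideal-code Johnson bound of Theorem~\ref{thm:ideal3djb}. Your contribution is simply to spell out the geometric-series identities showing that both floors in the bound are trivial, which is the ``comparing'' step the paper leaves implicit.
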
 

With the observation that  $ \frac{\theta{(k)}}{\theta(d)} =\theta(m-1,q^{d+1}) $, we  have shown the following.

\begin{cor}
For $ d\ge 1 $, $ m>1 $, and for any positive integral factorisation $ \Lambda_1\cdot \Lambda_2\cdots \Lambda_{n-1} =\theta(m-1,q^{d+1}) $, there exists a J-optimal (ideal) $ (\Lambda_1\times\Lambda_2\times \cdots \times \Lambda_{n-1} \times\theta(d), q+1, 0, 1) $ $ n-$D OOC .
\end{cor}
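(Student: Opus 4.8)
The plan is to combine the previous theorem with the corollaries on dimension-splitting that were already established. By Theorem~\ref{thm:optimal ideal d-spread projective}, for $d+1$ a proper divisor of $k+1$, there exists a J-optimal ideal $(\Lambda\times\theta(d),q+1,0,1)$ 2-D OOC, where $\Lambda=\frac{\theta(k)}{\theta(d)}$. The first step is the arithmetic identity $\frac{\theta(k,q)}{\theta(d,q)}=\theta(m-1,q^{d+1})$ whenever $d+1$ divides $k+1$, with $m=\frac{k+1}{d+1}$; this is precisely the observation flagged just before the corollary, and it follows from the factorization $\frac{q^{k+1}-1}{q^{d+1}-1}=\frac{(q^{d+1})^{m}-1}{q^{d+1}-1}$. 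Writing $Q=q^{d+1}$, this says $\Lambda=\theta(m-1,Q)$, so $\Lambda$ ranges over all point-counts of projective spaces $PG(m-1,Q)$ as $m>1$ varies, and the condition ``$d+1$ a proper divisor of $k+1$'' becomes simply ``$d\ge 1$ and $m>1$''.

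Next I would invoke the iterative dimension-splitting machinery. By Lemma~\ref{lem: onlyTmatters_capacity} (or equivalently the $n=1$ case packaged in Corollary~\ref{cor: 1D optimal to nD optimal} together with Corollary~\ref{cor: onlyTmatters}), a J-optimal $(\Lambda\times T,w,\la_a,\la_c)$ 2-D OOC of capacity $M$ yields an $(\Lambda_1\times\Lambda_2\times\cdots\times\Lambda_{n-1}\times T,w,\la_a,\la_c)$ $n$-D OOC of the same capacity $M$ for every positive integral factorization $\Lambda=\Lambda_1\Lambda_2\cdots\Lambda_{n-1}$. The only thing that needs checking is that J-optimality is preserved: this is exactly the content of Corollary~\ref{cor: onlyTmatters}, which gives $J(\Lambda_1\times\cdots\times\Lambda_{n-1}\times T,w,\la)=J(\Lambda\times T,w,\la)$ since both products equal $N=\Lambda\cdot T$ with the same time-spreading length $T=\theta(d)$. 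Hence the split code attains the same Johnson bound the original 2-D code attained, so it is J-optimal, and it is ideal because $\la_a=0$ is carried along unchanged.

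Putting these together: given $d\ge 1$, $m>1$, and a factorization $\Lambda_1\Lambda_2\cdots\Lambda_{n-1}=\theta(m-1,q^{d+1})$, set $k+1=m(d+1)$ so that $d+1$ is a proper divisor of $k+1$; Theorem~\ref{thm:optimal ideal d-spread projective} furnishes a J-optimal ideal $(\theta(m-1,q^{d+1})\times\theta(d),q+1,0,1)$ 2-D OOC, and the splitting step above converts it into a J-optimal ideal $(\Lambda_1\times\Lambda_2\times\cdots\times\Lambda_{n-1}\times\theta(d),q+1,0,1)$ $n$-D OOC, as claimed.

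I do not expect any genuine obstacle here, since every ingredient is already in hand; the corollary is essentially a repackaging. The one point requiring a moment's care is making sure the hypotheses line up: ``$d+1$ a \emph{proper} divisor of $k+1$'' in Theorem~\ref{thm:optimal ideal d-spread projective} corresponds to $m=\frac{k+1}{d+1}\ge 2$, i.e.\ $m>1$, which matches the corollary's hypothesis exactly, and the degenerate case $\Lambda=1$ (which would force $m=1$) is correctly excluded. One should also note that the factorization $\Lambda_1\cdots\Lambda_{n-1}$ is allowed to be trivial (some $\Lambda_i=1$), recovering lower-dimensional codes, so the statement is consistent across all $n\ge 2$.
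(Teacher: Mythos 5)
Your proposal is correct and follows essentially the same route as the paper: the paper also deduces the corollary directly from Theorem~\ref{thm:optimal ideal d-spread projective} via the identity $\frac{\theta(k)}{\theta(d)}=\theta(m-1,q^{d+1})$, together with the earlier observation (Lemma~\ref{lem: onlyTmatters_capacity} and Corollary~\ref{cor: onlyTmatters}) that factoring the $\Lambda_i$'s preserves both capacity and the Johnson bound, hence J-optimality.
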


The following table will perhaps place this construction in context. Each of the optimal $ \Lambda\times T $ constructions described in the table gives rise to optimal higher dimensional OOCs, with dimensions limited by the number of distinct factors in $ \Lambda $.

\begin{table}[H]
\label{table:2}
\centering
\caption{J-optimal families of ideal 2-D OOCs that give rise to higher dimensional optimal codes.}
( $p$ prime, $q $ a prime power)
\[
\begin{array}{|l|l|c|}
\hline
 \textrm{ Parameters } & \textrm{ Conditions }   & \textrm{ Reference}\\
 \hline \hline
 \multicolumn{3}{c}{ \textrm{\textbf{Codes with} } \mathbf{\la=1}} \\
 \hline
 (\Lambda\times p,\Lambda ,0,1) &  \Lambda \le p,    & \cite{1315909} \\
  \hline
  \left( \theta(k,q^2)\times (q+1),q+1,0,1\right) &  k \ge 1  &  \cite{Alderson20111187}  \\
  \hline
  \left(\theta(k,q)\times (q-1),q,0,1\right) & k \ge 1  &  \cite{Alderson20111187}  \\
  \hline
  \left((2^{n}+1)\times \theta(k,2),2^n,0,1\right) & k \ge 1, n\ge 2  & \cite{Alderson20111187}  \\
  \hline
  (\frac{\theta(k)}{\theta(d)} \times \theta(d), q+1, 0, 1),  d<k, d+1|k+1,   & \textrm{Ideal} 
&  \textrm{Theorem } \ref{thm:optimal ideal d-spread projective}     \\[1em] 
  \hline
  \multicolumn{3}{c}{ \textrm{\textbf{Codes with} } \mathbf{\la \ge 2}} \\
 \hline
 (\Lambda\times p,\Lambda ,0,\la_c) &  \Lambda \le p, \la_c\ge 1    & \cite{1523307}\\
  \hline
  \left((q^{n}+1)\times \theta(k,q),q^n,0,q-1 \right) &  k \ge 1, n\ge 2  & \cite{Alderson20111187}  \\

  \hline
  \hline
\end{array}
\]
 \end{table}

\subsubsection{Construction 2}

In our second construction, codewords correspond to conics, and lines  in $ \Sigma=PG(3,q) $. An \textit{$m$-arc}  in $PG(2,q)$ is a collection of $m>2$ points such that no $3$ points are incident with a common line. In $PG(2,q)$, a (non-degenerate) conic is a $(q+1)$-arc. Elementary counting shows that this arc is complete (of maximal size) when $q$ is odd.  The $(q+2)$-arcs (hyperovals) exist in $PG(2,q)$ if $q$ is even and they are necessarily complete.  Conics are a special case of the so called normal rational curves. We will be interested in the existence of large collections of arcs pairwise intersecting in at most two points. From Theorem 8 of \cite{MR2359316}, and its proof, we obtain the following.  

\begin{thm}[\cite{MR2359316}] \label{thm:2-family}
In $ \Pi=PG(2,q) $ there exists a family $ \mathcal{F} $ of conics, pairwise intersecting in at most $ 2 $ points, where $ |\mathcal{F}|=q^3-q^2 $. Moreover, there is a distinguished line $ \ell $ in $ \Pi $ disjoint from each member of $ \mathcal{F} $.  
\end{thm}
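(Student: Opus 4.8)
## Proof Proposal for Theorem \ref{thm:2-family}

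The plan is to realize $\mathcal{F}$ explicitly as a family of ``circles'' in the affine plane $AG(2,q)=PG(2,q)\setminus\ell$, which I identify with the field $GF(q^2)$ (as a $2$-dimensional $GF(q)$-vector space), with $\ell$ playing the rôle of the line at infinity. This is the point of view used implicitly in the proof of Theorem~8 of \cite{MR2359316}, and it makes the intersection estimate transparent.

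First I would fix the identification $AG(2,q)\cong GF(q^2)$ and, for each $z_0\in GF(q^2)$ and each $c\in GF(q)^{\ast}$, set
\[
C_{z_0,c}=\{\,z\in GF(q^2):(z-z_0)^{q+1}=c\,\}.
\]
Since $z\mapsto z^{q+1}$ is the norm map $N\colon GF(q^2)\to GF(q)$, which in the two $GF(q)$-coordinates of $z$ is a non-degenerate anisotropic binary quadratic form, $C_{z_0,c}$ is the affine part of a conic; the condition $c\neq 0$ forces the homogenization to be an irreducible (non-degenerate) conic, and anisotropy of $N$ forces it to have no point on $\ell$. Thus each $C_{z_0,c}$ is a $(q+1)$-arc disjoint from $\ell$, so $\ell$ is the required distinguished line, and I take $\mathcal{F}=\{\,C_{z_0,c}:z_0\in GF(q^2),\ c\in GF(q)^{\ast}\,\}$.

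The heart of the argument is the pairwise intersection bound. Given $(z_0,c)\neq(z_0',c')$, any $z\in C_{z_0,c}\cap C_{z_0',c'}$ satisfies $(z-z_0)^{q+1}-(z-z_0')^{q+1}=c-c'$. Expanding $(z-z_0)^{q+1}=z^{q+1}-z_0^{q}z-z_0z^{q}+z_0^{q+1}$, the quadratic term $z^{q+1}$ cancels in the difference, and writing $a=z_0'-z_0$ one is left with
\[
\mathrm{Tr}\!\left(a\,z^{q}\right)=c-c'+z_0'^{\,q+1}-z_0^{\,q+1},
\]
where $\mathrm{Tr}\colon GF(q^2)\to GF(q)$ is the trace (here I use $a^{q}z+az^{q}=\mathrm{Tr}(az^{q})$). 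If $z_0\neq z_0'$ then $a\neq 0$, so $z\mapsto\mathrm{Tr}(az^{q})$ is a nonzero $GF(q)$-linear functional and the displayed equation describes a line of $AG(2,q)$; hence $C_{z_0,c}\cap C_{z_0',c'}$ lies on a line and, being part of a conic, has at most $2$ points. If $z_0=z_0'$ then $c\neq c'$ and the intersection is empty. In particular $(z_0,c)\mapsto C_{z_0,c}$ is injective (two equal members would share $q+1>2$ points), so $|\mathcal{F}|=|GF(q^2)|\cdot|GF(q)^{\ast}|=q^{2}(q-1)=q^{3}-q^{2}$, as claimed.

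The step I expect to need the most care is the verification that each $C_{z_0,c}$ genuinely is a non-degenerate conic --- an irreducible $(q+1)$-arc, not merely an abstract point set --- especially when $q$ is even, where one must check that the homogenized form (e.g.\ $x^{2}+xy+\mu y^{2}+cz^{2}$ with $\mathrm{Tr}_{GF(q)/GF(2)}(\mu)=1$) is irreducible over $GF(q)$ and meets $z=0$ only trivially. Everything else --- the cancellation of $z^{q+1}$, the linearity and surjectivity of the trace, a conic meeting a line in at most two points, and the final count --- is routine.
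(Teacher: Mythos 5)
Your proof is correct, and it is worth noting that the paper itself offers no argument for this statement: it is quoted verbatim as Theorem~8 of \cite{MR2359316}, so the only ``proof'' in the paper is the citation. Your proposal supplies a complete, self-contained verification. The norm-fiber construction $C_{z_0,c}=\{z\in GF(q^2):(z-z_0)^{q+1}=c\}$ does everything required: anisotropy of the norm form gives disjointness from the line at infinity $\ell$ and shows each $C_{z_0,c}$ is a full $(q+1)$-point conic; the cancellation of $z^{q+1}$ in the difference of two defining equations reduces the intersection of two distinct members either to a fiber condition (empty when $z_0=z_0'$, $c\ne c'$) or to the affine line $\mathrm{Tr}(az^q)=\mathrm{const}$ with $a=z_0'-z_0\ne 0$, whence at most two common points since the members are arcs; and injectivity of $(z_0,c)\mapsto C_{z_0,c}$ then yields $|\mathcal{F}|=q^2(q-1)=q^3-q^2$. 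The one delicate point, non-degeneracy of the homogenized form in characteristic $2$, you flag and resolve correctly: the radical of the associated alternating bilinear form is the single point $(0:0:1)$, at which the form takes the value $c\ne 0$, so the nucleus is off the conic and the zero set is a genuine $(q+1)$-arc. Note also that your family is (up to the choice of model) the orbit of the ``unit circle'' $z^{q+1}=1$ under the maps $z\mapsto\lambda z+z_0$, which is essentially the family underlying the cited theorem, so your route is an explicit reconstruction rather than a divergent one; its advantage for this paper is that it makes transparent exactly the properties used in Construction~2, namely disjointness from the distinguished line and the pairwise intersection bound.
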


Let  $G=\langle \phi \rangle$  be the Singer group as above, and let $\SSS$ be the  $ 1 $-spread determined (as in Theorem \ref{Rao}) by $G$  where say $Stab_G(\SSS)=H = \left \langle \phi^\Lambda \right \rangle$ where $ \Lambda=\frac{\theta(3)}{q+1}=q^2+1 $.\\
Through each line $ \ell $ of $ \SSS $, choose a plane $ \pi(\ell) $. As the members of $ \SSS $ are disjoint, each such plane contains precisely one member of $ \SSS $ (and therefore meets $ q^2 $ further members of $ \SSS $ in precisely one point). As $ H $ acts sharply transitively on the points of each line in $ \SSS $, each such plane has full $ H $ orbit. A dimension argument shows that any two elements in the $ H $-orbit of $ \pi(\ell) $ meet precisely in $ \ell $.  In each $ \pi(\ell) $, let $ \F(\ell) $ be a family of conics as in Theorem \ref{thm:2-family}.  Denote by $ \F =\cup \F(\ell) $, where the union is taken over all spread lines.    

Let $ \C \in \F$ be a conic, and let  $ A $ be the $ (q^2+1) \times (q+1)$ incidence array corresponding to $ \ell $. From the above, it follows  that $ A $,  considered as a codeword, satisfies $ \la_a = 0 $. For each such conic, choose a representative element of it's $ H $-orbit, and include its corresponding incidence array as a codeword. The aggregate of these codewords gives an ideal $ ( q^2+1 \times q+1, q+1, 0, 2) $-2D OOC, $ C_1 $. Note that $ \la_c=2 $ follows from the fact that two  conics  in $ \F $ are either coplanar, and therefore meet in at most two points, or are not coplanar, in which case their intersection lies on the line common to the two planes.

Note that as in Construction 1, the $ H $-orbits of non-spread lines of $ \Sigma $ correspond to an ideal  $ ( q^2+1 \times q+1, q+1, 0, 1) $-2D OOC, $ C_2 $. Since a line and a conic meet in at most two points, we have $ C= C_1\cup C_2 $ is an ideal $ ( q^2+1 \times q+1, q+1, 0, 2) $-2D OOC.  Moreover

\begin{equation}\label{eqn:size of construction 2}
|C|=  (q^2+1)\cdot(q^3-q^2)+\frac{\LL(3)-(q^2+1)}{q+1}=q(q^2+1)(q^2-q+1)
\end{equation}                
 
Comparing \ref{eqn:size of construction 2} to the bound in Theorem \ref{thm:ideal3djb} shows $ C $ to be asymptotically optimal. 

\begin{thm}\label{thm:asymp optimal ideal conics}
For $ q $ a prime power, there exists an asymptotically optimal $ (q^2+1 \times q+1, q+1, 0, 1) $ 2-D OOC.	 
\end{thm}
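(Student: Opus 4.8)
The plan is to take as the required code the line part $C_2$ of Construction~2, namely the collection of $H$-orbits of the lines of $\Sigma=PG(3,q)$ that do not lie in the spread $\SSS$; this is exactly the $k=3$, $d=1$ instance of Construction~1. First I would record the relevant parameters from Theorem~\ref{Rao} (here $\gcd(4,2)=2\neq 1$, so $\SSS$ is the unique short $G$-orbit of lines, a regular $1$-spread): $\Lambda=\theta(3)/\theta(1)=q^2+1$, $T=\theta(1)=q+1$, $N=\theta(3)=(q^2+1)(q+1)$, and each codeword is a line, so $w=q+1$.

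Next I would pin down the correlation values. Because $H=\langle\phi^{\Lambda}\rangle$ stabilises each spread line and acts regularly on its $q+1=T$ points, the $H$-orbit of a point $\beta^{i}$ is exactly the spread line through it; hence the $i$-th row of the incidence array of a line $\ell$ records $\ell\cap(\text{spread line }i)$. Since two distinct lines of $PG(3,q)$ meet in at most one point and $\ell\notin\SSS$, each row carries at most one pulse, so $C_2$ is AMOPS$(1)$ and therefore ideal, $\la_a=0$. For the cross-correlation, any two of the lines $\phi^{\Lambda i}(\ell),\phi^{\Lambda j}(\ell')$ arising from distinct codewords are distinct lines and so meet in at most one point, giving $\la_c\le 1$. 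I would also note that every non-spread line has full $H$-orbit: by Theorem~\ref{Rao} its $G$-orbit is regular (only $\SSS$ is short), so its $G$-stabiliser, and a fortiori its $H$-stabiliser, is trivial and the orbit length is $T=q+1$.

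Then I would count. The number of lines of $\Sigma$ is $\LL(3)=(q^2+1)(q^2+q+1)$, of which $q^2+1$ lie in $\SSS$, leaving $(q^2+1)(q^2+q+1)-(q^2+1)=q(q+1)(q^2+1)$ non-spread lines, split into full $H$-orbits of length $q+1$. Choosing one representative per orbit gives
\[
|C_2|=\frac{q(q+1)(q^2+1)}{q+1}=q(q^2+1).
\]
Finally I would evaluate the ideal Johnson bound of Theorem~\ref{thm:ideal3djb} at these parameters. With $\la_c=1$ it has two nested levels: the inner term is $\frac{N-T}{w-1}=\frac{q^3+q^2}{q}=q(q+1)$, and the outer term is $\frac{N}{Tw}\cdot q(q+1)=\frac{(q+1)(q^2+1)}{(q+1)^2}\cdot q(q+1)=q(q^2+1)$, both integral, so $J(\text{Ideal})=q(q^2+1)$.

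Comparing the two quantities, $|C_2|=q(q^2+1)=J(\text{Ideal})$, so from $|C_2|\le\Phi(C_2)\le J(\text{Ideal})=|C_2|$ we get $\Phi(C_2)=|C_2|$; thus $C_2$ in fact meets the bound exactly and $|C_2|/\Phi(C_2)=1$ for every prime power $q$, whence the family is asymptotically optimal in the sense of Definition~\ref{asym} (indeed optimal). I do not expect a genuine obstacle: the incidence geometry (the spread partition and the fact that two distinct lines share at most one point) is elementary, and the full-orbit claim is immediate from Theorem~\ref{Rao}. The one place deserving care is the evaluation of the nested floors in the Johnson bound; I have arranged the arithmetic so that both floors act on integers, which is what upgrades the conclusion from a mere asymptotic match to the exact equality $|C_2|=J(\text{Ideal})$. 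Since these are the $k=3$, $d=1$ parameters of Theorem~\ref{thm:optimal ideal d-spread projective}, the statement can also be read off directly from that result.
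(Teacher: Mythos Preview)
Your argument is correct for the parameters exactly as stated: with $\la_c=1$ the line code $C_2$ is indeed the $k=3$, $d=1$ instance of Construction~1, and your arithmetic checks out, so $|C_2|=q(q^2+1)=J(\text{Ideal})$ and the code is even J-optimal, not merely asymptotically so. As you yourself observe, this is nothing more than Theorem~\ref{thm:optimal ideal d-spread projective} specialised to $d=1$, $k=3$.

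The paper, however, proves a different statement. The text of Construction~2 makes clear that the intended code is $C=C_1\cup C_2$ with $\la_c=2$ (the ``$1$'' in the theorem line is evidently a typo), and the paper's size count is $|C|=q(q^2+1)(q^2-q+1)$. The matching ideal bound with $\la_c=2$ is $J(\text{Ideal})=q(q^2+1)(q+1)^2$, and $\dfrac{q^2-q+1}{(q+1)^2}\to 1$, which is where the \emph{asymptotic} (rather than exact) optimality comes from. Your $C_2$ alone would not do here, since $|C_2|/J(\text{Ideal})=1/(q+1)^2\to 0$ when $\la_c=2$; the conic family $C_1$ is the whole point of Construction~2.

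So: your proof is valid for the theorem as literally written, but it bypasses the new content (the conic families $\mathcal{F}(\ell)$) and reduces the statement to one already established. The paper's own argument is genuinely different---it builds the much larger $\la_c=2$ code using conics in the planes through spread lines---and that is what justifies presenting Theorem~\ref{thm:asymp optimal ideal conics} as a separate result rather than as a corollary of Theorem~\ref{thm:optimal ideal d-spread projective}.
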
 

\begin{cor}
For any positive integral factorisation $ \Lambda_1\cdot \Lambda_2\cdots \Lambda_{n-1} = q^{2}+1 $, there exists an asymptotically  optimal (ideal) $ (\Lambda_1\times\Lambda_2\times \cdots \times \Lambda_{n-1} \times q+1, q+1, 0, 1) $ $ n-$D OOC .
\end{cor}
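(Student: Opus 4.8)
The plan is to reduce this statement directly to Theorem~\ref{thm:asymp optimal ideal conics} by way of the dimension-raising machinery of Section~\ref{sec: 1D give 2D}, so that essentially nothing new has to be proved.

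First I would fix a prime power $q$ and invoke Theorem~\ref{thm:asymp optimal ideal conics} to obtain an ideal $(q^2+1 \times q+1,\, q+1,\, 0,\, 1)$ 2-D OOC $C$, which by~(\ref{eqn:size of construction 2}) has size $|C| = q(q^2+1)(q^2-q+1)$. The comparison of this size against the bound $J(Ideal)$ of Theorem~\ref{thm:ideal3djb} carried out just before that theorem shows that $C$ is in fact asymptotically $J$-optimal: writing $J_2 := J\bigl((q^2+1)\times(q+1),\, q+1,\, 1\bigr)$, one has $|C|/J_2 \to 1$ as $q\to\infty$.

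Next, given any factorisation $q^2+1 = \Lambda_1\Lambda_2\cdots\Lambda_{n-1}$, I would apply Lemma~\ref{lem: onlyTmatters_capacity} with $\Lambda = q^2+1$, $s = n$ and $T = q+1$: refolding each $\Lambda\times(q+1)$ codeword of $C$ along this factorisation produces an $n$-D OOC $C'$ with parameters $(\Lambda_1\times\cdots\times\Lambda_{n-1}\times(q+1),\, q+1,\, 0,\, 1)$ and $|C'| = |C|$. Note that $C'$ is still ideal and still has $\la_c = 1$: refolding is a bijection between array cells of the two shapes that commutes with the cyclic time shift, so the autocorrelation and cross-correlation sums in Section~\ref{sec:nD OOCs and bounds} are literally unchanged.

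Finally, to upgrade this capacity equality into asymptotic optimality, observe that $N := \Lambda_1\cdots\Lambda_{n-1}(q+1) = (q^2+1)(q+1)$ is unaffected by the refolding, so Corollary~\ref{cor: onlyTmatters} gives $J\bigl(\Lambda_1\times\cdots\times\Lambda_{n-1}\times(q+1),\, q+1,\, 1\bigr) = J_2$. Combining $|C'| = |C|$, the trivial bound $|C'| \le \Phi(C')$, and the Johnson bound of Theorem~\ref{thm: ddjb2}, I get
\[
|C| \;\le\; \Phi(C') \;\le\; J_2 ,
\]
and since $|C|/J_2 \to 1$ as $q\to\infty$ (equivalently as $N\to\infty$), the squeeze theorem forces $|C'|/\Phi(C') \to 1$, which is exactly asymptotic optimality in the sense of Definition~\ref{asym}. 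I do not anticipate a genuine obstacle here: the argument is a clean composition of Theorem~\ref{thm:asymp optimal ideal conics} with the general principle, already recorded after Lemma~\ref{lem: onlyTmatters_capacity}, that factoring the spreading dimensions of an (asymptotically) $J$-optimal OOC preserves (asymptotic) $J$-optimality. The only point requiring a line of care is the invariance of the Johnson bound under refolding, and that is precisely the content of Corollary~\ref{cor: onlyTmatters} together with the remark that refolding alters neither $\la_a$ nor $\la_c$.
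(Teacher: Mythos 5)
Your proposal is correct and follows essentially the same route the paper intends: take the asymptotically optimal $2$-D code of Theorem~\ref{thm:asymp optimal ideal conics}, refold its $(q^2+1)\times(q+1)$ codewords along the factorisation via Lemma~\ref{lem: onlyTmatters_capacity} (which changes neither the weight nor the correlation parameters), and use the invariance of the Johnson bound under refactoring of the spreading lengths (Corollary~\ref{cor: onlyTmatters}, or equivalently the corollary following Lemma~\ref{lem: onlyTmatters_capacity}) to transfer asymptotic $J$-optimality to the $n$-dimensional code. The only blemish you inherit from the paper itself is the cross-correlation value: Construction~2 actually yields $\la_c=2$, so the ``$\la_c=1$'' in the theorem and corollary statements appears to be a typo rather than a flaw in your argument.
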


\section{Conclusion}

Here, we have generalized to higher dimensions the notions of optical orthogonal codes. We establish bounds on general $ n $-dimensional OOCs, as well as specific types of ideal codes.   The bounds presented here subsume many of the existing bounds appearing in the literature that are typically applied  to codes of dimension three or less.  We present two new constructions of ideal codes; one furnishes an infinite family of optimal codes for each dimension $ n\ge 2 $, and another which provides an asymptotically optimal family for each dimension $ n\ge 2 $.



  \bibliographystyle{plain} 

\bibliography{C:/Users/UNBSJ/OneDrive/texed/my-texmf/bibtex/bib/main}

%
%
%
%

\end{document}